\newcommand{\norm}[2][]{\|#2\|_{#1}}
\newcommand{\betrag}[1]{\left\lvert#1\right\rvert}
\newcommand{\ue}{u_{ε}}
\newcommand{\ve}{v_{ε}}
\newcommand{\we}{w_{ε}}
\newcommand{\De}[1]{D_{ε}[#1]}
\newcommand{\Det}[1]{\widetilde{D_{ε}}[#1]}
\newcommand{\Qe}{Q_{ε}}
\newcommand{\Ee}{E_{ε}}
\newcommand{\f}[2]{\frac{#1}{#2}}
\newcommand{\nf}[2]{\nicefrac{#1}{#2}}
\newcommand{\nn}{\nonumber}
\newcommand{\Om}{\Omega}
\newcommand{\Ombar}{\overline{\Om}}
\newcommand{\Lom}[1]{L^{#1}(\Om)}
\newcommand{\Ldom}[1]{L^{#1}(∂\Om)}
\newcommand{\kl}[1]{\left(#1\right)}
\newcommand{\kkl}[1]{\left[#1\right]}
\newcommand{\set}[1]{\left\{#1\right\}}
\newcommand{\bdry}{\big|_{∂\Om}}
\newcommand{\ballexterior}{ℝ^3\setminus B_1(0)}
\newcommand{\charact}[1]{χ_{#1}}
\newcommand{\ds}{\,\mathrm{d} s}
\newcommand{\dz}{\,\mathrm{d}z}
\newcommand{\dy}{\,\mathrm{d}y}
\newcommand{\drho}{\,\mathrm{d}ρ}
\newcommand{\dsigma}{\,\mathrm{d}σ}
\title{Rate of convergence in the large diffusion limit for the heat equation \\
with a dynamical boundary condition}
\author{
Marek Fila\\
\small Department of Applied Mathematics and Statistics,
\small Comenius University\\[-2mm]
\small 84248 Bratislava, Slovakia\\[-2mm]
\small e-mail: fila@fmph.uniba.sk
\\
\\
Kazuhiro Ishige\\
\small Graduate School of Mathematical Sciences,
\small The University of Tokyo\\[-2mm]
\small 3-8-1 Komaba, Meguro-ku, Tokyo 153-8914, Japan\\[-2mm]
\small e-mail: ishige@ms.u-tokyo.ac.jp
\\
\\
Tatsuki Kawakami\\
\small Department of Applied Mathematics and Informatics, 
\small Ryukoku University\\[-2mm]
\small Seta Otsu 520-2194, Japan\\[-2mm]
\small e-mail: kawakami@math.ryukoku.ac.jp
\\
\\
Johannes Lankeit\\
\small Department of Applied Mathematics and Statistics,
\small Comenius University\\[-2mm]
\small 84248 Bratislava, Slovakia\\[-2mm]
\small e-mail: jlankeit@math.upb.de
}
\date{}
\newtheorem{thm}{Theorem}
\newtheorem{lemma}[thm]{Lemma}
\newtheorem{defn}[thm]{Definition}
\newtheorem{remark}[thm]{Remark}
\begin{document}
\maketitle 
\begin{abstract}
\noindent 
We study the heat equation on a half-space or on an exterior domain with a linear dynamical boundary condition. 
Our main aim is to establish the rate of convergence to solutions of the Laplace equation with the same dynamical
boundary condition as the diffusion coefficient tends to infinity.\\
 \textbf{Keywords:} heat equation, dynamical boundary condition, large diffusion limit\\
 \textbf{MSC (2010):} 35K05, 35B40
\end{abstract}
\section{Introduction}\label{sec:intro}
We consider the heat equation with a dynamical boundary condition, 
\begin{align}\label{parabolic}
 ε∂_t\ue -Δ\ue&= 0 & &\text{in } \Om\times(0,∞),\\
 ∂_t\ue+∂_{ν}\ue &= 0 & & \text{on } ∂\Om\times(0,∞),\nn\\
 \ue(\cdot,0)&=φ & & \text{in } \Om,\nn\\
 \ue(\cdot,0)&=φ_b & & \text{on } ∂\Om\nn
\end{align}
and the 
connection with its elliptic counterpart,  
\begin{align}\label{elliptic}
 -Δu & = 0 & &\text{in } \Om\times(0,∞),\\
 ∂_tu+∂_{ν}u &= 0 & & \text{on } ∂\Om\times(0,∞),\nn\\
 u(\cdot,0)&=φ_b & & \text{on } ∂\Om.\nn
\end{align}
Here, $φ\in \Lom{∞}$, $φ_b\in\Ldom{∞}$, $ε\in(0,1)$.

The expected relation is, of course, that solutions of \eqref{parabolic} converge to those of \eqref{elliptic} as $ε→0$. 
Indeed, for the case of the half-space $\Om=ℝ^N_+:=ℝ^{N-1}\timesℝ_+$, $N\ge2$, this
has been proven recently in \cite{fila_ishige_kawakami_largediffusionlimit}.  This means, in particular, that the influence
of the initial function $φ$ is lost in the limit. In the present paper we are interested in a detailed description of the loss
of influence of $φ$. More precisely, we investigate the rate of convergence of solutions of 
\eqref{parabolic} to solutions of \eqref{elliptic} as $ε→0$.


We will deal with the case $\Om=ℝ^N_+$, $N\ge 2$, and the radially symmetric setting for $\Om=ℝ^3\setminus
B_1(0)$, $B_1(0):=\{x\in ℝ^3~|~|x|<1\}$. It turns out that the rate of convergence is of order $\sqrt{ε}$
in both cases.

For bounded domains $\Om$, results on convergence as $ε→0$ were established in \cite{GC3} by a method that is completely
different from the one used in \cite{fila_ishige_kawakami_largediffusionlimit} and in this paper. The rate of convergence was not
studied in \cite{GC3}.

Various aspects of analysis of parabolic
equations with dynamical boundary conditions have been treated by many authors,
for existence, uniqueness and regularity see for example
\cite{AQR, EMR, E2, E3, FV, H, VV1, V1}, for blow-up of solutions \cite{FV, KI, BP1, BBR, BP2, BPR}, for asymptotic behaviour of
solutions  \cite{GC3, GC2, GS, WH, BC, FQ2}, and for the mean curvature flow with dynamical boundary
conditions see \cite{GH, GOT}. Some of similar issues for elliptic equations with dynamical boundary conditions
were considered in
\cite{KI, E1, E4, FP, FIK01, FIK02, FIK03, FIK04, FIK05, FQ1, GM, KNP1, KNP2, L, V2, Y}, for instance.

Given $ϕ\in \Lom\infty$, by $S_1(t)ϕ:=θ(t)$ we denote the bounded 
 solution of
\begin{align*}
 ∂_t θ &= Δθ && \text{in } \Om\times(0,∞),\\
 θ &= 0 && \text{on } ∂\Om\times(0,∞),\\
 θ(\cdot,0)&= ϕ &&\text{in } \Om.
\end{align*}

For $ψ\in\Ldom\infty$, we want to have that $S_2(t)ψ:=p(t)$, where $p$ solves 
\begin{align*}
 -Δ p &= 0 && \text{in }\Om\times[0,∞),\\
 ∂_t p+∂_{ν} p&=0 && \text{on } ∂\Om\times(0,∞),\\
 p(\cdot,0)&=ψ&&\text{in } ∂\Om,
\end{align*}
and hence define 
\[
 \kkl{S_2(t)ψ}(x)= \f{2Γ(\f N2)}{\sqrt{π}Γ(\f{N-1}2)}\int_{ℝ^{N-1}} (x_N+t)^{1-N} \kl{1+\f{|x'-y'|^2}{(x_N+t)^2}}^{-\f N2} ψ(y') \dy',
\]
for $\Om=ℝ^N_+$, $x=(x',x_N)\in ℝ^{N-1}\times ℝ_+$, and 
\[
 \kkl{S_2(t)ψ}(r) = \f{ψ}{re^t}, \qquad r>1, t\ge 0.
\]
for radially symmetric $ψ\in \Ldom\infty$ (i.e. $ψ\in ℝ$) in case of $\Om=\ballexterior$.

We furthermore introduce 

\[
 F_1[φ_b](x,t) := ∂_t \kkl{S_2(t)φ_b} (x), \qquad φ_b\in \Ldom\infty,\quad x\in \Om, \; t>0,
\]
and 
\[
 F_2[v] (x,t) := - \kkl{S_2(0)∂_{ν}v\bdry(\cdot,t)}(x) - \int_0^t ∂_t \kkl{S_2(t-s)∂_{ν}v\bdry(\cdot,s)}(x) \ds 
\]
for $x\in \Om$, $t>0$ and functions $v\colon \Ombar\to ℝ$ that have a normal derivative $∂_{ν}v\in\Ldom\infty$ on $∂\Om$.

If 
\begin{equation}\label{defwe}
 \we(x,t) = \kkl{S_2(t)φ_b}(x) - \int_0^t \kkl{S_2(t-s)∂_{ν}\ve (s)}(x) \ds,
\end{equation}
then 
\[
 ∂_t \we = F_1[φ_b] + F_2[\ve]
\]
and 
\begin{equation}\label{weq}
 Δ\we=0, \quad \kkl{∂_t\we + ∂_{ν}\we}\bdry = \kkl{- ∂_{ν}\ve}\bdry, \quad \we(\cdot,0)\bdry=φ_b.
\end{equation}
If 
\begin{equation}\label{defve}
 \ve(x,t):=\kkl{S_1\kl{\nf t{ε}}Φ}(x) - \int_0^t \kkl{S_1\kl{\nf{(t-s)}{ε}}∂_t\we(s)}(x)\ds,
\end{equation}
where $Φ=φ-S_2(0)φ_b$, then $\ve$ solves 
\begin{equation}\label{veq}
 ε∂_t\ve = Δ\ve - ε∂_t\we = Δ\ve - εF_1[φ_b] -εF_2[\ve], \quad \ve\bdry = 0,\quad \ve(\cdot,0)=Φ,
\end{equation}
so that $\ue:=\ve+\we$ is a classical solution of \eqref{parabolic}. 

\begin{defn}\label{def:solution}
Let $φ\in \Lom{\infty}$ and $φ_b\in\Ldom{\infty}$. Let $T\in(0,∞]$ and 
\[
 \ve, ∂_{ν}\ve, \we \in C(\Ombar\times(0,T)).
\]
We call $(\ve,\we)$ a solution of \eqref{veq}, \eqref{weq} -- and $\ue:=\ve+\we$ a solution of \eqref{parabolic} -- in $\Om\times(0,T)$ if \eqref{defve} and \eqref{defwe} are satisfied. The solutions are called global-in-time solutions if $T=∞$. 
\end{defn}

Note: Here $ν$ is to be understood as (smooth) vector field defined on all of $\Ombar$, which on $∂\Om$ coincides with the outer unit normal. In the context of the cases treated in this article, of course, $∂_{ν} = -∂_r$ for $\Om=\ballexterior$ and $∂_{ν}=-∂_{x_N}$ for $\Om=ℝ^N_+$. 

The following was shown in \cite[Theorem~1.1 and Corrollary~1.1]{fila_ishige_kawakami_largediffusionlimit}: 
\begin{thm}\label{thm:halfspace}
 Let $N\ge2$ and $\Om=ℝ^N_+$; let $ε\in(0,1)$, $φ\in\Lom{∞}$ and $φ_b\in\Ldom{∞}$. Then problem \eqref{veq}, \eqref{weq} has a unique global-in-time solution $(\ve,\we)$ satisfying 
 \begin{equation}\label{halfspacenormXTfinite}
  \sup_{0<t<T} \kl{\norm[\Lom{∞}]{\ve(\cdot,t)}+\sqrt{\nf t{ε}}\norm[\Lom{\infty}]{∂_{ν}\ve(\cdot,t)}+\norm[\Lom{\infty}]{\we(\cdot,t)} } < \infty
 \end{equation}
 for any $T>0$. Furthermore, $\ve$ and $\we$ are bounded and smooth in $\Ombar\times I$ for any bounded interval $I\subset(0,∞)$.
 Moreover, for every $T>0$ there is $C(T)>0$ such that for every $ε\in(0,1)$
and $φ, φ_b$ as above we have
 \begin{align}\label{estimateXThalfspace}
  \sup_{0<t<T}& \kl{\norm[\Lom\infty]{\ve(\cdot,t)} + \sqrt{\nf{t}{ε}}\norm[\Lom\infty]{∂_{ν}\ve(\cdot,t)} + \norm[\Lom\infty]{\we(\cdot,t)} }\nn\\
  & \le C(T)\kl{\norm[\Lom\infty]{φ} + \norm[\Ldom\infty]{φ_b}}.
 \end{align}
 \\
 Finally, $\ue:=\ve+\we$ is a classical solution of \eqref{parabolic} and for every compact set $K$ in $\Ombar$ and $0<τ_1<τ_2<∞$ 
it holds that 
 \begin{equation}\label{thm:hspace-eqlimit}
  \lim_{ε\to0}\sup_{τ_1<t<τ_2} \norm[L^\infty(K)]{\ue(\cdot,t)-S_2(t)φ_b} = 0.
  \end{equation}
\end{thm}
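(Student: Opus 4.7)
The plan is to obtain $(\ve,\we)$ as the fixed point of the operator $\mathcal{T}\colon(v,w)\mapsto(\tilde v,\tilde w)$ defined by the right-hand sides of \eqref{defve}--\eqref{defwe}, with $\partial_tw$ in the first slot expanded to $F_1[φ_b]+F_2[v]$. The natural ambient space is
\[
X_T:=\set{(v,w) : v,w,∂_{ν}v\in C(\Ombar\times(0,T)),\ \norm[X_T]{(v,w)}<\infty}
\]
equipped with the norm appearing in \eqref{halfspacenormXTfinite}. The weight $\sqrt{\nf t{ε}}$ in front of $\partial_{ν}v$ is deliberate: since $\partial_{ν}S_1(\cdot)$ carries a $(\cdot)^{-1/2}$ singularity and $S_1$ is evaluated at time $t/ε$ in \eqref{defve}, this weight is precisely what makes the fixed-point estimates close uniformly in $ε$.

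The linear ingredients I would assemble first are: for the Dirichlet heat semigroup on the half-space (obtained explicitly by reflection), $\norm[\Lom\infty]{S_1(t)ϕ}\le\norm[\Lom\infty]{ϕ}$ and $\norm[\Lom\infty]{∂_{ν}S_1(t)ϕ}\le C t^{-1/2}\norm[\Lom\infty]{ϕ}$; for $S_2$, $L^\infty$-contractivity together with the pointwise decay of the kernel and of its time derivative read off the explicit formula, which allow bounding $F_1[φ_b]$ by $\norm[\Ldom\infty]{φ_b}$ and $F_2[v]$ in terms of the normal-derivative seminorm of $v$ with time weights matching those in $X_T$.

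With these at hand the core step is to verify that $\mathcal{T}$ is a uniform-in-$ε$ contraction on $X_{T_0}$ for some small $T_0>0$. The critical component is $\sqrt{\nf t{ε}}\norm[\Lom\infty]{\partial_{ν}\tilde v(\cdot,t)}$: its homogeneous part yields $\sqrt{\nf t{ε}}\cdot C(\nf t{ε})^{-1/2}\norm[\Lom\infty]{Φ}=C\norm[\Lom\infty]{Φ}$ directly, and the Duhamel part reduces, via the kernel estimates above, to $ε$-independent convolutions in $s$ whose contribution shrinks with $T_0$. Similarly, the $\we$-component is bounded by $\norm[\Ldom\infty]{φ_b}$ plus $\int_0^t\norm[\Lom\infty]{∂_{ν}v(s)}\,ds\le C\sqrt{ε T_0}\norm[X_{T_0}]{(v,w)}\le C\sqrt{T_0}\norm[X_{T_0}]{(v,w)}$, which is small for small $T_0$ uniformly in $ε\in(0,1)$. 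This yields local well-posedness, uniqueness and \eqref{estimateXThalfspace} on $(0,T_0)$ with $T_0$ and $C(T_0)$ independent of $ε$; iteration on successive intervals promotes the result to arbitrary $T>0$. Smoothness in $\Ombar\times I$ follows from a standard bootstrap: \eqref{weq} is an elliptic problem in $x$ with smooth-in-$t$ data, and the linear parabolic equation in \eqref{veq} then gives smoothness of $\ve$.

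For the limit \eqref{thm:hspace-eqlimit} I would split $\ue-S_2(t)φ_b=\ve+(\we-S_2(t)φ_b)$. The second summand equals $-\int_0^t S_2(t-s)∂_{ν}\ve(s)\,ds$, whose $L^\infty$-norm is bounded by $C\int_0^t\sqrt{\nf ε s}\,ds=2C\sqrt{εt}\to0$ thanks to the uniform bound from \eqref{estimateXThalfspace}. For the first summand, the reflection representation of the Dirichlet heat kernel yields the local bound $|S_1(\nf t{ε})Φ(x)|\le Cx_N\sqrt{\nf{ε}{t}}\norm[\Lom\infty]{Φ}$ for $x$ in any compact $K\subset\Ombar$, which vanishes as $ε\to0$ uniformly on $K\times[τ_1,τ_2]$; the Duhamel contribution to $\ve$ is handled by the same mechanism combined with the uniform $L^\infty$-bound on $\partial_t\we=F_1[φ_b]+F_2[\ve]$. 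I expect the main technical obstacle to be precisely this uniform-in-$ε$ closure of the fixed-point estimate: only the precise weight $\sqrt{\nf t{ε}}$ makes the scaling $S_1(\cdot/ε)$ exactly compensate the singularity of its normal derivative, and any coarser choice of weight destroys the $ε$-independence of $C(T_0)$.
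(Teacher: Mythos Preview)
Your plan is correct and matches the paper's approach. The only cosmetic difference is that the paper (following \cite{fila_ishige_kawakami_largediffusionlimit}) sets up the fixed point for $v$ alone via $Q_\varepsilon[v]=S_1(t/\varepsilon)\Phi-D_\varepsilon[\varphi_b]-\widetilde{D_\varepsilon}[v]$ in the space $X_T$ carrying only the $v$-norm, and recovers $w_\varepsilon$ afterwards from \eqref{defwe}; since in your map $\mathcal{T}$ the $\tilde v$-component depends only on $v$ (through $F_1[\varphi_b]+F_2[v]$) and $\tilde w$ is determined by $v$, your pair formulation is equivalent. Your treatment of the limit \eqref{thm:hspace-eqlimit} via the splitting $\ve+(\we-S_2(t)\varphi_b)$ and the local bound on $S_1(t/\varepsilon)\Phi$ is exactly what the paper does in the proof of Theorem~\ref{thm:upperbound-halfspace} and Lemma~\ref{lem:weminusS2}.
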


In this paper we establish an analogous result for radially symmetric solutions when the domain $\Om$ is the exterior of the unit
ball in $ℝ^3$.
\begin{thm}\label{thm:ball}
Let $\Om=\ballexterior$, $ε\in(0,\nf{1}{\sqrt{π}})$ and let the functions $φ\in\Lom{∞}$ and $φ_b\in\Ldom{∞}$ be radially
symmetric.\\  
a) Then problem \eqref{veq}, \eqref{weq} has a unique radially symmetric global-in-time solution $(\ve,\we)$ satisfying \eqref{halfspacenormXTfinite} and $\ue:=\ve+\we$ is a classical solution of \eqref{parabolic}.\\
b) Furthermore, if 
 \begin{equation}\label{decayofphi}
\sup_{ρ\ge 1} |ρφ(ρ)|<\infty,
  \end{equation}
and $0<τ_1<τ_2<∞$, then 
 \begin{equation}\label{thm:ball-eqlimit}
  \lim_{ε\to0}\sup_{τ_1<t<τ_2} \norm[L^\infty(\Om)]{\ue(\cdot,t)-S_2(t)φ_b} = 0.
 \end{equation}
 Moreover, for every $ε_0\in(0,\nf1{\sqrt{π}})$ and $T>0$ there is $C(T)>0$ such that for every $ε\in(0,ε_0)$
 and $φ, φ_b$ as above
the following holds: 
\begin{align}\label{thm:ball-estimate}
    \sup_{0<t<T} &\kl{\norm[\Lom\infty]{\ve(\cdot,t)} + \sqrt{\nf{t}{ε}}\norm[\Lom\infty]{∂_{ν}\ve(\cdot,t)} + \norm[\Lom\infty]{\we(\cdot,t)} }\nn\\
    & \le C(T)\kl{\norm[\Lom\infty]{φ} + \norm[\Ldom\infty]{φ_b}+\sup_{ρ\ge 1} |ρφ(ρ)|}.
 \end{align}
\end{thm}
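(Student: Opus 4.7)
The plan is to follow the strategy used for the half-space in Theorem \ref{thm:halfspace} while exploiting the drastic simplification that radial symmetry produces in the three-dimensional exterior domain. The key reduction is the substitution $V(r,t):=r\ve(r,t)$, since for radial $u$ one has $r\Delta u=(ru)_{rr}$: the problem $ε∂_t\ve = Δ\ve$ with homogeneous Dirichlet condition at $r=1$ is thereby equivalent to a one-dimensional heat equation $ε∂_tV=V_{rr}$ on $(1,∞)$ with $V(1,t)=0$, which after odd reflection across $r=1$ becomes the whole-line heat equation. Under this identification $S_1(t/ε)$ acts as Gaussian convolution on $r\mapsto rφ(r)$ extended oddly about $r=1$, which precisely accounts for the appearance of $\sup_{ρ\ge1}|ρφ(ρ)|$ in \eqref{thm:ball-estimate}. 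For $S_2$ the explicit expression $[S_2(t)φ_b](r)=φ_b/(re^t)$ renders $F_1[φ_b](r,t)=-φ_b/(re^t)$ fully explicit, and $F_2[\ve]$ reduces to a time-integral of the scalar $∂_ν\ve|_{r=1}$ weighted by simple radial kernels derived from $S_2$.

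For part (a) I would then set up a fixed point for $(\ve,\we)$ via the defining integral equations \eqref{defve}--\eqref{defwe} in the space of radial functions equipped with the norm on the left-hand side of \eqref{halfspacenormXTfinite}. The bounds that are needed are
\[
\norm[\Lom\infty]{S_1(t/ε)Φ}\le C\sup_{ρ\ge1}|ρΦ(ρ)|,\qquad \norm[\Ldom\infty]{∂_νS_1(t/ε)Φ}\le C\sqrt{ε/t}\sup_{ρ\ge1}|ρΦ(ρ)|,
\]
together with their Duhamel counterparts for the forcing $∂_t\we$; all of these follow from the odd-reflection Gaussian representation. Combined with explicit estimates for $S_2$, they yield a contraction on a short time interval and hence a unique local solution, which extends globally by linearity. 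For part (b), the estimate \eqref{thm:ball-estimate} is obtained by tracking data-dependence through these estimates, using $Φ=φ-φ_b/r$ so that $\sup_{ρ\ge1}|ρΦ(ρ)|\le\sup_{ρ\ge1}|ρφ(ρ)|+\norm[\Ldom\infty]{φ_b}$. The restriction $ε<1/\sqrt{π}$ enters via the explicit constants in the Gaussian bounds.

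For the convergence \eqref{thm:ball-eqlimit} I would decompose
\[
\ue(\cdot,t)-S_2(t)φ_b = \ve(\cdot,t) - \int_0^t \kkl{S_2(t-s)∂_ν\ve(\cdot,s)}\ds,
\]
where the integral is bounded by $C\int_0^t e^{-(t-s)}\sqrt{ε/s}\ds = O(\sqrt{ε})$ thanks to the weighted gradient bound, and therefore vanishes uniformly on $[τ_1,τ_2]$ as $ε\to0$. For $\ve$ itself, write $\ve(r,t)=V(r,t/ε)/r$ and split into $r\ge M$ (where $|\ve|\le \norm[\infty]{V}/M$ is uniformly small for large $M$) and $1\le r\le M$ (where pointwise decay of the one-dimensional Gaussian semigroup, via the odd-reflection formula and dominated convergence, yields uniform convergence to $0$ as $t/ε\to\infty$); the Duhamel correction is handled analogously.

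The main obstacle I anticipate is the weighted boundary-derivative estimate for $S_1(t/ε)$ above and its coupling with $∂_t\we=F_1[φ_b]+F_2[\ve]$: unlike in the half-space, where translation invariance gives an immediate Gaussian gradient bound, here the odd reflection produces an additional boundary contribution and the factor $1/r$ inherited from $\ve=V/r$ must be handled carefully near $r=1$ to preserve the weight $\sqrt{t/ε}$ uniformly in $ε\in(0,1/\sqrt{π})$.
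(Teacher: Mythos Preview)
Your overall strategy---Banach fixed point for $\ve$ in $X_T$, contraction coming from the $\Det{\cdot}$ estimate, and the one-dimensional Gaussian reduction via $r\mapsto r\ve$---is exactly the paper's; the threshold $ε<1/\sqrt{π}$ indeed arises from the contraction constant in \eqref{ballDetsmall} as $T\to0$.

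The gap is a structural conflation of parts (a) and (b). Part~(a) assumes only $φ\in\Lom\infty$, \emph{not} \eqref{decayofphi}, yet the bounds you write down for $S_1(t/ε)Φ$ and its normal derivative are all in terms of $\sup_{ρ\ge1}|ρΦ(ρ)|$, which may be infinite there. The paper keeps these separate: the contraction estimate \eqref{ballDetsmall} for $\Det{\cdot}$ is independent of $Φ$, so for (a) it suffices that $S_1(\cdot/ε)Φ\in X_T$ for each \emph{fixed} $ε$ and $T$---and this follows from $φ\in\Lom\infty$ alone (the resulting $X_T$-norm grows like $\sqrt{T/ε}$, but that is harmless for existence at fixed $ε$, and iteration with a data-independent $T_*$ gives global existence). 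The decay hypothesis \eqref{decayofphi} enters only in (b), where the fixed point is run on a ball $B_M\subset X_T$ of $ε$-independent radius, using \eqref{eq:drS1ballexterior-decreasing} to bound $\norm[X_T]{S_1(\cdot/ε)Φ}$ uniformly in $ε$; this is what produces \eqref{thm:ball-estimate}.

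Your direct argument for the convergence \eqref{thm:ball-eqlimit} via an $r\ge M$ / $r\le M$ split is also more than is needed, and the step $|v_ε|\le\norm[\infty]{V}/M$ on $\{r\ge M\}$ is not justified as written: $V=r\ve$ is not the free evolution of $rΦ$ but includes the Duhamel contribution, and a uniform-in-$r$ bound on $r\ve$ is not part of the $X_T$-norm. In the paper, once \eqref{thm:ball-estimate} is available, the estimates \eqref{S1ballsmall}, \eqref{eq:Deball}, \eqref{eq:Detball} and \eqref{eq:weminusS2} already give $\norm[\Lom\infty]{\ue(\cdot,t)-S_2(t)φ_b}=O(\sqrt{ε})$ uniformly in $r$, so \eqref{thm:ball-eqlimit} is simply read off as a corollary of Theorem~\ref{thm:upperbound-ball}.
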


The role of assumption \eqref{decayofphi} is explained in Section~\ref{sec:remark-condition}.

Our remaining results are concerned with the question what the optimal rate of convergence in \eqref{thm:hspace-eqlimit}
and \eqref{thm:ball-eqlimit} is.

\begin{thm} \label{thm:upperbound-halfspace}
Let $\Om=ℝ^N_+$, $N\ge2$. Let $φ\in\Lom\infty$, $φ_b\in\Ldom\infty$, $K\subset\Ombar$ compact and $0<τ_1<τ_2<∞$. 
Then there is $C>0$ such that 
\[
 \sup_{τ_1<t<τ_2} \norm[L^\infty(K)]{\ue(\cdot,t)-S_2(t)φ_b}\le C\sqrt{ε},\qquad \text{for every }\; ε\in(0,1).
\]
\end{thm}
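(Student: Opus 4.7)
The starting point is the identity
\[
  \ue - S_2(t)φ_b \;=\; \ve \;+\; \bigl(\we - S_2(t)φ_b\bigr) \;=\; \ve \;-\; \int_0^t \kkl{S_2(t-s)∂_{ν}\ve(s)}\ds,
\]
which comes straight from \eqref{defwe}. I would bound the two summands on the right separately by $C\sqrt{ε}$.

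For the integral term I would use that each $S_2(τ)$ is an $L^\infty$-contraction (the kernel in the formula for $S_2$ integrates to $1$), combined with the crucial boundary-gradient estimate $\norm[\Lom\infty]{∂_{ν}\ve(s)} \le C\sqrt{ε/s}$ that Theorem~\ref{thm:halfspace} supplies. This gives
\[
  \Bigl\|\int_0^t S_2(t-s)∂_{ν}\ve(s)\ds\Bigr\|_{\Lom\infty}
   \;\le\; \int_0^t C\sqrt{\nf{ε}{s}}\ds \;=\; 2C\sqrt{εt}\;\le\; 2C\sqrt{ε\,τ_2},
\]
which is already of the desired order; this step is essentially the reason one expects the exponent $\nf12$ in the theorem.

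The remaining task is to show $\norm[L^\infty(K)]{\ve(\cdot,t)}\le C\sqrt{ε}$ for $τ_1\le t\le τ_2$. Using \eqref{defve} I would split
\[
  \ve(y,t) \;=\; \kkl{S_1(\nf{t}{ε})Φ}(y) \;-\; \int_0^t \kkl{S_1(\nf{(t-r)}{ε})∂_r\we(r)}(y)\dr
\]
and treat the first term via the reflection principle for the half-space Dirichlet heat semigroup: the explicit kernel gives $|[S_1(τ)Φ](y)| \le \norm[\Lom\infty]{Φ}\,\mathrm{erf}(y_N/(2\sqrt{τ}))\le C\norm[\Lom\infty]{Φ}\,y_N/\sqrt{τ}$, so for $τ=t/ε$ and $y\in K$ (with $y_N$ bounded by $M=M(K)$) and $t\ge τ_1$ this is $\le C\sqrt{ε}$.

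The step I expect to be the main obstacle is controlling the Duhamel integral $\int_0^t S_1(\nf{(t-r)}{ε})∂_r\we(r)\dr$ by $C\sqrt{ε}$ on $K$. Decomposing $∂_r\we=F_1[φ_b]+F_2[\ve]$, one sees that $\norm[\Lom\infty]{∂_r\we(r)}$ carries a singularity of order $1/r$ from the factor $1/(y_N+r)$ in $∂_r[S_2(r)φ_b]$ at $y_N=0$, so a naive estimate using $L^\infty$-contractivity of $S_1$ diverges. The plan is to exploit the same reflection bound for the Dirichlet heat kernel in the form $K_1(y,z,τ)\le C\min(y_Nz_N/τ,1)\,G_τ(y-z)$, which delivers an extra factor $y_N/\sqrt{τ}=y_N\sqrt{ε/(t-r)}$ on $K$; on the part of the integral where $(t-r)/ε$ is not small, this additional decay plus the bound on $∂_r\we$ should integrate to $O(\sqrt{ε})$. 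On the complementary part where $r$ is close to $t$, I would integrate by parts in $r$, using that $\we$ itself is uniformly bounded in $L^\infty$ by \eqref{estimateXThalfspace}, in order to replace the singular $∂_r\we$ by $\we$ at the cost of a boundary term $[S_1(0)\we(t)-S_1(t/ε)\we(0)]$ (whose second piece is again controlled by the reflection bound) and an extra factor $\nf{1}{ε}\Delta S_1$ on $\we$, which one checks produces, after splitting and careful estimation, another $O(\sqrt{ε})$ contribution on the compact set $K$.
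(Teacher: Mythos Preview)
Your decomposition $\ue-S_2(t)φ_b=\ve+(\we-S_2(t)φ_b)$ and your treatment of the two ``easy'' pieces are exactly what the paper does: the bound on $\we-S_2(t)φ_b$ is the paper's Lemma~\ref{lem:weminusS2}, and your reflection estimate for $S_1(\nf t{ε})Φ$ on $K$ is precisely \eqref{S1halfspacesmallness}.

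Where you diverge is in the Duhamel integral. Notice that, by definition,
\[
  \int_0^t S_1\!\kl{\nf{(t-r)}{ε}}\,∂_r\we(r)\,dr
  \;=\;\int_0^t S_1\!\kl{\nf{(t-r)}{ε}}\bigl(F_1[φ_b]+F_2[\ve]\bigr)(r)\,dr
  \;=\;\De{φ_b}+\Det{\ve},
\]
and the paper simply quotes the ready-made estimates \eqref{Dehalfspacesmallness} and \eqref{Dethalfspacesmallness} (established in \cite{fila_ishige_kawakami_largediffusionlimit}) for these two objects, each of which already carries a factor $\sqrt{ε}$. Together with the uniform bound \eqref{estimateXThalfspace} on $\norm[X_{τ_2}]{\ve}$ this finishes the proof in two lines. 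The $1/r$-type singularity you worry about in $F_1[φ_b]$ is exactly what is absorbed in the proof of \eqref{Dehalfspacesmallness}; there is no need to rederive it.

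Your proposed alternative for this term has a real gap. If you integrate by parts in $r$ over all of $[0,t]$ (which is what your displayed boundary term $[S_1(0)\we(t)-S_1(\nf t{ε})\we(0)]$ corresponds to), the first boundary contribution is $S_1(0)\we(t)=\we(t)$, which is merely $O(1)$, not $O(\sqrt{ε})$; you only claim control of the \emph{second} piece. If instead you integrate by parts only on a subinterval near $r=t$, the boundary term at $r=t$ is still $\we(t)$ and the same problem persists. Moreover, the remaining bulk term picks up a factor $\nf1{ε}$ from $∂_r S_1(\nf{(t-r)}{ε})$, and ``one checks it is $O(\sqrt{ε})$'' is not an argument---without a cancellation against the $O(1)$ boundary term this route does not close. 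Use the paper's identification with $\De{φ_b}$ and $\Det{\ve}$ instead.
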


\begin{thm}\label{thm:upperbound-ball}
 Let $\Om=\ballexterior$. Let $φ\in\Lom\infty$ 
be radially symmetric and such that 
\eqref{decayofphi} holds. Assume further that $φ_b$ is constant and
$ε_0\in(0,\nf1{\sqrt{π}})$, $0<τ_1<τ_2<∞$. Then there is $C>0$ such that 
\[
 \sup_{τ_1<t<τ_2} \norm[\Lom\infty]{\ue(\cdot,t)-S_2(t)φ_b} \le C\sqrt{ε},\qquad \text{for every }\; ε\in(0,ε_0).
\]
\end{thm}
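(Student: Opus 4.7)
The plan is to start from the decomposition $\ue=\ve+\we$ together with the representation \eqref{defwe}. Since $φ_b$ is constant, $\kkl{S_2(τ)φ_b}(r)=φ_b/(re^τ)$, and hence
\[
 \ue(x,t) - \kkl{S_2(t)φ_b}(x) = \ve(x,t) - \int_0^t \kkl{S_2(t-s)∂_{ν}\ve(\cdot,s)}(x)\ds.
\]
By radial symmetry the trace $q(s):=∂_{ν}\ve(\cdot,s)\bdry$ is scalar-valued, so $\kkl{S_2(t-s)q(s)}(r)=q(s)/(re^{t-s})$; the bound $|q(s)|\le C\sqrt{ε/s}$ from \eqref{thm:ball-estimate} therefore yields that the above Duhamel integral is bounded in $\Lom\infty$ by $C\sqrt{ε}\int_0^t s^{-1/2}\ds = 2C\sqrt{εt}=O(\sqrt{ε})$ on $[τ_1,τ_2]$. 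It remains to establish $\sup_{t\in[τ_1,τ_2]}\|\ve(\cdot,t)\|_{\Lom\infty}=O(\sqrt{ε})$.

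The central tool is the radial-to-1D reduction: for radial $f$ on $\Om$, $U(r,τ):=r\kkl{S_1(τ)f}(r)$ solves the 1D heat equation on $r>1$ with Dirichlet condition at $r=1$ and initial datum $rf$. Reflecting across $r=1$ and using the elementary identity $\int_0^\infty\kkl{G(s-σ,τ)-G(s+σ,τ)}\dsigma=\mathrm{erf}(s/(2\sqrt{τ}))$ (with $G$ the 1D heat kernel), together with $\mathrm{erf}(s/(2\sqrt{τ}))/(s+1)\le 1/\sqrt{πτ}$ for $s\ge 0$, yields the smoothing estimate
\[
 \|S_1(τ)f\|_{\Lom\infty}\le \|rf\|_{\Lom\infty}/\sqrt{πτ}, \qquad τ>0.
\]
Hypothesis \eqref{decayofphi} guarantees $\|rΦ\|_{\Lom\infty}\le \sup_{ρ\ge1}|ρφ(ρ)|+|φ_b|<∞$ for $Φ=φ-φ_b/r$; applying the estimate at $τ=t/ε$ gives $\|S_1(t/ε)Φ\|_{\Lom\infty}\le C\sqrt{ε/t}=O(\sqrt{ε})$ on $[τ_1,τ_2]$.

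For the Duhamel part of \eqref{defve}, decompose $∂_t\we=F_1[φ_b]+F_2[\ve]$. Constancy of $φ_b$ gives $rF_1[φ_b](r,s)=-φ_b e^{-s}$ (bounded), and the explicit form of $S_2$ together with $|q(s)|\le C\sqrt{ε/s}$ yields $\|rF_2[\ve](\cdot,s)\|_{\Lom\infty}\le |q(s)|+\int_0^s|q(σ)|e^{-(s-σ)}\dsigma \le C(\sqrt{ε/s}+\sqrt{εs})$. Combining the smoothing estimate with the standard time integrals $\int_0^t(t-s)^{-1/2}\ds=2\sqrt{t}$, $\int_0^t s^{-1/2}(t-s)^{-1/2}\ds=π$, $\int_0^t s^{1/2}(t-s)^{-1/2}\ds=πt/2$,
\[
 \left\|\int_0^t \kkl{S_1(\nf{(t-s)}{ε})∂_t\we(\cdot,s)}\ds\right\|_{\Lom\infty}\le \sqrt{ε/π}\int_0^t \f{\|r∂_t\we(\cdot,s)\|_{\Lom\infty}}{\sqrt{t-s}}\ds
\]
is $O(\sqrt{ε})$ on $[τ_1,τ_2]$---the only $O(1)$ contribution, coming from $F_1$, becomes $O(\sqrt{ε})$ after multiplication by the prefactor. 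Combined with the bounds for $S_1(t/ε)Φ$ and for the Duhamel integral involving $S_2$, this completes the plan. The main obstacle is the smoothing estimate $\|S_1(τ)f\|_{\Lom\infty}\le \|rf\|_{\Lom\infty}/\sqrt{πτ}$: it has to combine the vanishing of $U$ at $r=1$ from the Dirichlet condition (dominant near the boundary) with the factor $1/r$ coming from the radial reduction (dominant for large $r$), and hypothesis \eqref{decayofphi} enters here precisely to make $\|rΦ\|_{\Lom\infty}$ finite.
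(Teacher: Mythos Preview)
Your proof is correct and follows essentially the same route as the paper. The paper also splits $\ue-S_2(t)φ_b=\ve+(\we-S_2(t)φ_b)$, handles the second piece exactly as you do via $|∂_ν\ve(1,s)|\le C\sqrt{ε/s}$ from \eqref{thm:ball-estimate} (this is Lemma~\ref{lem:weminusS2}), and bounds $\ve$ through \eqref{defve}; the only cosmetic difference is that the paper packages the Duhamel contributions into the operators $\De{φ_b}$ and $\Det{\ve}$ and derives the bounds \eqref{eq:Deball}, \eqref{eq:Detball} by direct computation with the explicit kernel, whereas you abstract the key smoothing estimate $\|S_1(τ)f\|_{\Lom\infty}\le \|rf\|_{\Lom\infty}/\sqrt{πτ}$ (which is the paper's \eqref{S1ballsmall} with $(r-1)/r\le 1$) and apply it once to each forcing term.
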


The upper bounds from Theorems~\ref{thm:upperbound-halfspace} and \ref{thm:upperbound-ball} are sharp.

\begin{thm}\label{thm:lowerbound-halfspace}
 Let $\Om=ℝ^N_+$, $N\ge 2$ and $φ_b\equiv 0$. There is $φ\in\Lom\infty$ such that 
for every $x_0>0$ there is $τ_2>0$ such that for every compact set $K\subset ℝ^{N-1}\times(x_0,∞)\times(0,τ_2)$ there are $ε_0>0$ and $C>0$ such that 
\[
 \ue(x,t)-\kkl{S_2(t)φ_b}(x) =  \ue(x,t) \ge C\sqrt{ε} 
\]
for every $ε\in(0,ε_0)$ and every $(x,t)\in K$.
\end{thm}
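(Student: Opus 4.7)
The plan is to pick $φ\equiv 1$, construct an explicit subsolution capturing the $\sqrt{ε}$ behaviour, and then invoke a comparison principle. Since $φ_b\equiv 0$ gives $S_2(t)φ_b\equiv 0$, the claim reduces to $\ue\ge C\sqrt{ε}$ on $K$. The heuristic motivating this is that, after the rescaling $τ=t/ε$, the interior equation becomes the standard heat equation while the dynamical boundary condition $∂_τu+ε∂_νu=0$ degenerates, as $ε\to0$, to a Dirichlet condition. Hence the leading order of $\ue$ should be $S_1(t/ε)φ$, and for $φ\equiv 1$ on the half-space this equals $\operatorname{erf}(x_N/(2\sqrt{t/ε}))\sim x_N\sqrt{ε/(πt)}$ on compact subsets of $\Om$, which is precisely of order $\sqrt{ε}$.

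Guided by this, for a fixed $\delta>0$ I would define
\[
 \underline u(x,t):=\operatorname{erf}\kl{\f{x_N}{2\sqrt{t/ε+\delta}}},\qquad x=(x',x_N)\in\Ombar,\ t\ge 0.
\]
Because $\underline u$ depends only on $x_N$ and $τ=t/ε+\delta$, the self-similar one-dimensional heat equation identity gives $ε∂_t\underline u-Δ\underline u=0$ in $\Om\times(0,∞)$. On $∂\Om$ we have $\underline u\equiv 0$, so $∂_t\underline u=0$, while $∂_ν\underline u=-(π(t/ε+\delta))^{-1/2}\le 0$, so $∂_t\underline u+∂_ν\underline u\le 0$; moreover $\underline u(\cdot,0)\le 1=φ$ in $\Om$ and $\underline u(\cdot,0)|_{∂\Om}=0=φ_b$.

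The central step is then a comparison principle applied to $w:=\ue-\underline u$, which satisfies $ε∂_tw-Δw=0$ in $\Om$, $∂_tw+∂_νw\ge 0$ on $∂\Om$ and $w(\cdot,0)\ge 0$. I expect this step to be the main technical obstacle: because $\Om$ is unbounded and the boundary condition involves a time derivative, standard parabolic comparison is not directly applicable. My plan is to derive it from the integral representation \eqref{defve},~\eqref{defwe} by reinterpreting $w$ as the solution of an analogous coupled system with non-negative initial interior datum, zero boundary datum, and a non-negative ``boundary source'' $-(∂_t\underline u+∂_ν\underline u)$ on $∂\Om$; since the kernels underlying $S_1$ and $S_2$ are positivity-preserving, a Picard iteration in the spirit of the existence proof underlying Theorem~\ref{thm:halfspace} should yield $w\ge 0$, hence $\ue\ge\underline u$.

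Given $\ue\ge\underline u$, the conclusion is a short computation. For $(x,t)\in K$ one has $x_N\ge x_0$ and $t\le τ_2$, so $\underline u(x,t)\ge\operatorname{erf}(x_0/(2\sqrt{τ_2/ε+\delta}))$. Concavity of $\operatorname{erf}$ on $[0,∞)$ implies $\operatorname{erf}(z)\ge\operatorname{erf}(1)\,z$ for $z\in[0,1]$, so for $ε$ small enough (depending on $x_0,τ_2,\delta$) that both $x_0/(2\sqrt{τ_2/ε+\delta})\le 1$ and $\delta ε\le τ_2$ hold,
\[
 \ue(x,t)\ge\f{\operatorname{erf}(1)\,x_0}{2\sqrt{2τ_2/ε}}=C\sqrt{ε},\qquad C:=\f{\operatorname{erf}(1)\,x_0}{2\sqrt{2τ_2}},
\]
and taking e.g.\ $τ_2:=1$ (or any fixed positive number) provides the required $τ_2>0$ associated with each $x_0>0$. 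All the ingredients aside from the comparison step are elementary once the explicit subsolution $\underline u$ has been identified.
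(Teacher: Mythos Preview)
Your approach is genuinely different from the paper's. The paper chooses $φ=\charact{\set{x_N>b}}$ and works entirely within the integral representation: since $\ve$ is the fixed point of $\Qe$ and $φ_b=0$, one has $\ve=S_1(\nf t{ε})φ-\Det{\ve}$, and the explicit one-dimensional computation of $S_1(\nf t{ε})φ$ gives the main term $x_N\sqrt{ε/(πt)}\,I(ε,t,x_N)$ with $I\to 1$. The remainders $\Det{\ve}$ and $\we-S_2(t)φ_b$ are controlled by \eqref{Dethalfspacesmallness} and Lemma~\ref{lem:weminusS2}, both of order $\sqrt{ε}$ times positive powers of $t$; this is why $τ_2$ must be taken small (depending on $x_0$) so that the main term dominates. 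Your subsolution argument, if it goes through, is more elegant and in fact yields a stronger conclusion ($τ_2$ arbitrary), and it bypasses all the quantitative $X_T$-machinery.

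The gap is exactly where you locate it, and your proposed fix does not close it. The Picard iteration underlying Theorem~\ref{thm:halfspace} is an iteration for $\ve$ via $\Qe$, and the coupling term $\Det{v}$ is built from $∂_{ν}v\bdry$, not from $v$ itself. Positivity of the kernels of $S_1$ and $S_2$ therefore does not make the iteration sign-preserving: even if an iterate $v^{(n)}$ is nonnegative, $∂_{ν}v^{(n)}\bdry$ can have either sign, so neither $\Det{v^{(n)}}$ nor the resulting $\we$-component has a definite sign. A comparison principle for \eqref{parabolic} on the half-space with dynamical boundary condition is plausible (and would likely follow from a Phragm\'en--Lindel\"of type argument or from the results in the von~Below et~al.\ literature adapted to unbounded domains), but it is not contained in the paper's framework and your sketch does not establish it. If you want to avoid this issue entirely, note that for $φ\equiv 1$ the paper's own route works verbatim: $S_1(\nf t{ε})\mathbf{1}=\operatorname{erf}\bigl(x_N/(2\sqrt{t/ε})\bigr)$ already has the required lower bound $x_N\sqrt{ε/(πt)}\,I$, and the remainders are handled by \eqref{Dethalfspacesmallness}, \eqref{eq:weminusS2} and \eqref{estimateXThalfspace} exactly as in the paper, at the cost of having $τ_2$ depend on $x_0$.
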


\begin{thm}\label{thm:lowerbound-ball}
 Let $\Om=\ballexterior$ and $φ_b\equiv 0$. Then there is $φ\in \Lom{\infty}$ such that 
 for every $r>1$ there is $τ_2>0$ such that for every compact set 
$K\subset (ℝ^3\setminus B_r(0) )\times(0,τ_2)$ there are $C>0$ and $ε_0>0$ such that 
\[
 \ue(x,t) - \kkl{S_2(t)φ_b} (x) = \ue(x,t) \ge C\sqrt{ε}
\]
for every $ε\in(0,ε_0)$ and every $(x,t)\in K$.
\end{thm}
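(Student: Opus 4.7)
The plan is to exhibit a concrete radially symmetric initial datum $φ$ and obtain the lower bound by comparing $\ue$ with the solution of the pure Dirichlet heat equation. I take $φ(x):=\f{1}{|x|}$ for $x\in\Om$; this lies in $\Lom\infty$, is radial, and satisfies \eqref{decayofphi}, so Theorem~\ref{thm:ball} supplies a unique global-in-time classical solution $\ue$. Since $φ_b\equiv0$ we have $S_2(t)φ_b\equiv0$, so the desired estimate reduces to $\ue\ge C\sqrt{ε}$ on $K$.

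First I would verify that $\ue\ge0$ on $\Ombar\times[0,\infty)$. By radial symmetry, set $g(t):=\ue(1,t)$; the dynamical boundary condition reads $g'(t)=-∂_ν\ue(1,t)=∂_r\ue(1,t)$ with $g(0)=φ_b=0$. If $g$ ever became negative, then at $t^\ast:=\inf\{t>0:g(t)<0\}$ one would have $g(t^\ast)=0$, $\ue(\cdot,t^\ast)\ge0$ in $\Om$ (by the parabolic maximum principle up to time $t^\ast$), and $g(t)<0$ for $t$ arbitrarily close above $t^\ast$. The parabolic Hopf boundary point lemma applied at the boundary minimum $(1,t^\ast)$ forces $∂_r\ue(1,t^\ast)>0$, so $g$ is strictly increasing there — a contradiction. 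Thus $g\ge0$, and a further application of the maximum principle (with nonnegative initial and boundary data) yields $\ue\ge0$ throughout $\Ombar\times[0,\infty)$.

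Next I would introduce the auxiliary function $\tilde v:=S_1(t/ε)φ$, which solves $ε∂_t\tilde v=Δ\tilde v$ in $\Om$ with $\tilde v(1,t)=0$ and $\tilde v(\cdot,0)=φ$. The substitution $U(r,t):=r\tilde v(r,t)$ reduces the three-dimensional radial heat equation to the one-dimensional heat equation $εU_t=U_{rr}$ on $(1,\infty)$ with $U(1,t)=0$ and $U(r,0)=1$, which is solved explicitly by
\[
\tilde v(r,t)=\f{1}{r}\,\mathrm{erf}\!\kl{\f{r-1}{2\sqrt{t/ε}}}.
\]
The difference $\ue-\tilde v$ then solves the same heat equation, vanishes initially in $\Om$, and on $∂\Om$ equals $g(t)\ge0$; hence $\ue\ge\tilde v$ on $\Ombar\times[0,\infty)$ by the parabolic maximum principle.

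Finally, given $r>1$, choose any $τ_2>0$, and let $K\subset(ℝ^3\setminus B_r(0))\times(0,τ_2)$ be compact. Then every $(x,t)\in K$ satisfies $|x|\in[R_1,R_2]$ with $R_1\ge r>1$ and $t\in[T_0,T_1]$ with $T_0>0$. For $ε_0$ so small that $(R_2-1)\sqrt{ε_0}/(2\sqrt{T_0})\le 1$, the argument of $\mathrm{erf}$ in the formula for $\tilde v$ lies in $[0,1]$ on $K$, where $\mathrm{erf}(z)\ge z/\sqrt{π}$. Hence
\[
\tilde v(x,t)\ge\f{(|x|-1)\sqrt{ε}}{2|x|\sqrt{π t}}\ge\f{R_1-1}{2R_2\sqrt{π T_1}}\sqrt{ε}=:C\sqrt{ε},
\]
yielding $\ue(x,t)\ge C\sqrt{ε}$ for all $(x,t)\in K$ and $ε\in(0,ε_0)$. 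The only step requiring real care is the verification of $\ue\ge0$ (equivalently $g\ge0$) via Hopf; the remainder is explicit computation and a straightforward comparison.
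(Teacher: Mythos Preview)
Your approach differs substantially from the paper's. The paper takes $φ(r)=\tfrac1r\,χ_{\{r>b\}}$, writes $\ue=S_1(\nf tε)φ-\Det{\ve}+\we$, computes the first term explicitly, and controls the two remainders via \eqref{eq:Detball} and Lemma~\ref{lem:weminusS2}. Because those remainders are of order $\sqrt{εt}$ while the leading term is of order $\sqrt{ε/t}$, the paper is forced to take $τ_2$ small. Your comparison route bypasses the remainder terms entirely and in fact delivers the conclusion for \emph{every} $τ_2>0$, which is stronger than what the theorem asserts.

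There is, however, a gap in the positivity step: your Hopf argument tacitly requires $t^\ast>0$. If $t^\ast=0$ the parabolic Hopf lemma does not apply (there is no parabolic past), and with the incompatible data $φ_b=0$, $φ(1^+)=1$ the radial derivative at $(1,0)$ is not even classically meaningful. You must rule out $t^\ast=0$ separately. One way uses the representation from Definition~\ref{def:solution}: since $\ve\bdry=0$ one has $g(t)=\we(1,t)=\int_0^t e^{-(t-s)}∂_r\ve(1,s)\ds$, and
\[
∂_r\ve(1,s)=∂_r\bigl[S_1(\nf sε)φ\bigr](1)-∂_r\Det{\ve}(1,s)=\sqrt{\tfrac{ε}{πs}}+O(ε)
\]
by your explicit formula for $\tilde v$ together with \eqref{eq:drDetball} and \eqref{thm:ball-estimate}; the singular term dominates for small $s$, so $g>0$ near $0$ and hence $t^\ast>0$. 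It is also worth stating explicitly that the maximum-principle comparisons on the unbounded exterior domain are legitimate because both $\ue$ and $\tilde v$ are bounded (by Theorem~\ref{thm:ball} and \eqref{eq:S1ballexterior}).
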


The paper is organized as follows. In Section~\ref{sec:notation} we introduce some notation, in Section~\ref{sec:estimates-halfspace} we recall some estimates
in the case of the half-space, and in Section~\ref{sec:estimates-exterior} we derive analogous estimates for the exterior domain. In Section~\ref{sec:existence} 
we prove Theorem~\ref{thm:ball} and in Section~\ref{sec:upperbounds} 
Theorems~\ref{thm:upperbound-halfspace} and \ref{thm:upperbound-ball}. Section~\ref{sec:remark-condition} is devoted to a remark on the long-time behaviour
of solutions of the heat equation on the exterior domain and Sections~\ref{sec:lowerbound-halfspace} and \ref{sec:lowerbound:exterior} to the proofs of Theorems~\ref{thm:lowerbound-halfspace}
and \ref{thm:lowerbound-ball}, respectively.
\section{Preparation: Introducing further notation. The space \texorpdfstring{$X_T$}{$X$\textsubscript{$T$}}.} \label{sec:notation}

With the abbreviations 
\begin{equation*}
 \De {ψ} (x,t) := \int_0^t \kkl{S_1\kl{\nf{(t-s)}{ε}}F_1[ψ](\cdot,s)}(x) \ds, \qquad ψ\in \Ldom{∞},  
\end{equation*}
and 
\begin{equation}\label{defDet}
 \Det{ψ} (x,t) = \int_0^t \kkl{S_1\kl{\nf{(t-s)}{ε}}F_2[ψ](\cdot,s)}(x) \ds, \qquad ψ\in \Lom{∞},
\end{equation}
we introduce 
\begin{equation*}\label{defQe}
 \Qe [v](x,t):=\kkl{S_1\kl{\nf{t}{ε}}Φ}(x) - \De{φ_b}(x,t) -\Det v(x,t).
\end{equation*}

The proofs will be based on a fixed point argument for $\Qe$ in the space $X_T$, which we define as 
\begin{equation*}
 X_T:=\set{v\in C(\Ombar\times(0,T)) \mid ∂_{x_N}v\in C(\Ombar\times(0,T)),\;\;\norm[X_T]{v}<\infty}
\end{equation*}  
if $\Om=ℝ^N_+$, and as   
\begin{equation*}
X_T:=\set{v\in C(\Ombar\times(0,T)) \mid v \text{ radially symmetric}, ∂_{ν}v \in C(\Ombar\times(0,T)),\;\;\norm[X_T]{v}<\infty} 
\end{equation*}
for $\Om=\ballexterior$. In both cases, we equip it with the norm 
\begin{equation}\label{defnormXT}
 \norm[X_T]{v}:=\sup_{t\in(0,T)} E_{ε}[v](t),
\end{equation}
where 
\begin{equation}\label{defEe}
 E_{ε}[v](t):=\norm[\Lom\infty]{v(\cdot,t)}+\sqrt{\nf t{ε}}\norm[\Ldom\infty]{∂_{ν}v(\cdot,t)},
\end{equation}
and observe that it thereby becomes a Banach space.

\section{Estimates: Recalling the case \texorpdfstring{$\Omega=ℝ^N_+$}{$\Omega=ℝ$\textsuperscript{N}\textsubscript{$+$}}}\label{sec:estimates-halfspace}

In this section we recall the necessary estimates from \cite{fila_ishige_kawakami_largediffusionlimit} for $\Om=ℝ^N_+$. They have been used in the proof of
Theorem~\ref{thm:halfspace} in \cite{fila_ishige_kawakami_largediffusionlimit} and will be essential for our proof of Theorems \ref{thm:upperbound-halfspace} and \ref{thm:lowerbound-halfspace}. We will take care of the corresponding results for $\Om=\ballexterior$ -- and their proofs -- in the next section. 

\begin{lemma}
 Let $\Om=ℝ^N_+$. Then for every $ϕ\in\Lom\infty$, 
 \begin{equation}\label{eq:alt:S1}
  \sup_{t>0} \norm[\Lom\infty]{S_1(t)ϕ}\le \norm[\Lom\infty]{ϕ}
 \end{equation}
 and 
 \begin{equation}\label{eq:alt:dNS1}
  \sup_{t>0} t^{\f12}\norm[\Lom\infty]{∂_{x_N}[S_1(t)ϕ]}\le \norm[\Lom\infty]{ϕ}.
 \end{equation}
 Moreover, for every $L>0$ there is $C>0$ such that for every $ϕ\in\Lom\infty$, $x\in ℝ^{N-1}\times(0,L)$ and $t>0$, 
 \begin{equation}\label{S1halfspacesmallness}
  |[S_1(t)ϕ](x)|\le Ct^{-\f12} \norm[\Lom\infty]{ϕ}
 \end{equation}
\end{lemma}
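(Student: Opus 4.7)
The plan is to use the explicit Dirichlet heat kernel on $\Om=\mathbb R^N_+$, obtained by odd reflection across $\{x_N=0\}$. Setting $G(z,t):=(4\pi t)^{-N/2}e^{-|z|^2/(4t)}$ and $K(x,y,t):=G(x'-y',x_N-y_N,t)-G(x'-y',x_N+y_N,t)$, I have $[S_1(t)\phi](x)=\int_\Om K(x,y,t)\phi(y)\,dy$. Estimate \eqref{eq:alt:S1} is then immediate from the maximum principle (equivalently, $K\ge 0$ on $\Om\times\Om$ together with $\int_\Om K(x,\cdot,t)\le 1$, since $K$ is dominated by the free Gaussian).

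For \eqref{eq:alt:dNS1} I would differentiate $K$ in $x_N$ and integrate against $\phi$: the two resulting Gaussian moments $\int_\Om\tfrac{|x_N\pm y_N|}{2t}G(x'-y',x_N\pm y_N,t)\,dy$ are bounded by $Ct^{-1/2}$, since the transversal factor $\int_{\mathbb R}|z|(4\pi t)^{-1/2}e^{-z^2/(4t)}\,dz$ is $\le C\sqrt t$ and the tangential one integrates to one.

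The only non-routine step is \eqref{S1halfspacesmallness}, where an extra $t^{-1/2}$ factor beyond the uniform bound must be extracted. Here I would exploit the identity
\[
 e^{-(x_N-y_N)^2/(4t)}-e^{-(x_N+y_N)^2/(4t)}=2e^{-(x_N^2+y_N^2)/(4t)}\sinh\kl{\tfrac{x_Ny_N}{2t}}
\]
together with $\sinh(a)\le a\,e^{a}$ for $a\ge 0$ to bound
\[
 0\le K(x,y,t)\le \tfrac{x_Ny_N}{t}(4\pi t)^{-N/2}e^{-|x'-y'|^2/(4t)}e^{-(x_N-y_N)^2/(4t)}.
\]
Integrating against $|\phi|$, using the elementary estimate $\int_0^\infty y_N(4\pi t)^{-1/2}e^{-(x_N-y_N)^2/(4t)}\,dy_N\le x_N+C\sqrt t$ (shift $u=y_N-x_N$), produces $|[S_1(t)\phi](x)|\le C\norm[\Lom\infty]{\phi}\kl{\tfrac{x_N^2}{t}+\tfrac{x_N}{\sqrt t}}$. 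For $x_N\in(0,L)$ and $t\ge 1$ this is $\le C(L)t^{-1/2}\norm[\Lom\infty]{\phi}$, while for $t\le 1$ the uniform bound \eqref{eq:alt:S1} already suffices since $t^{-1/2}\ge 1$.

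The main obstacle is this last estimate: the extra $t^{-1/2}$ decay can only come from the Dirichlet cancellation between a source point and its reflected image, and the $\sinh$ identity is the cleanest way to turn that cancellation into a quantitative $x_N/\sqrt t$ factor at large times; the first two bounds, by contrast, reduce to standard Gaussian moment estimates.
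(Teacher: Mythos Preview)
Your argument is correct. The paper does not actually prove this lemma in-text; it merely cites \cite[(2.1), (2.2), and proof of (2.3)]{fila_ishige_kawakami_largediffusionlimit}, so there is no detailed argument to compare against. Your explicit computation with the reflected heat kernel, and in particular the $\sinh$ identity to extract the factor $x_N/\sqrt t$ for \eqref{S1halfspacesmallness}, is the standard route and is almost certainly what the cited source does.

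One small point: \eqref{eq:alt:dNS1} is stated with constant exactly $1$, not a generic $C$. Your triangle-inequality bound on the two Gaussian moments in fact yields $\tfrac{3}{2\sqrt{\pi}}<1$, so it already suffices; alternatively, writing $S_1(t)\phi=(e^{t\Delta}\tilde\phi)\big|_{\Om}$ for the odd extension $\tilde\phi$ and using $\int_{\mathbb R}\tfrac{|z|}{2t}(4\pi t)^{-1/2}e^{-z^2/(4t)}\,dz=(\pi t)^{-1/2}$ gives the constant $\pi^{-1/2}$ in one line.
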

\begin{proof} 
\cite[(2.1) and (2.2) and proof of (2.3), respectively]{fila_ishige_kawakami_largediffusionlimit}. 
\end{proof}

\begin{lemma}\label{S2boundhalfspace}
 Let $\Om=ℝ^N_+$ and $ψ\in\Ldom{∞}$. Then for every $t\ge 0$ 
 \[
  \norm[\Lom{∞}]{S_2(t)ψ} \le \norm[\Ldom{∞}]{ψ}.
 \]
\end{lemma}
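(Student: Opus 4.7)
The plan is to recognize the integral representation of $S_2(t)\psi$ as a convolution of $\psi$ with a nonnegative kernel that (together with the explicit prefactor) has total mass $1$ for every $x\in\Omega$ and every $t\ge 0$ with $x_N+t>0$. Given this mass-one property, the bound follows at once by pulling $|\psi(y')|\le \|\psi\|_{L^\infty(\partial\Omega)}$ out of the integral and taking the supremum over $x\in\Omega$.

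To establish the mass-one property, I would substitute $z=(y'-x')/(x_N+t)$, whose Jacobian $(x_N+t)^{N-1}$ exactly cancels the factor $(x_N+t)^{1-N}$ in the kernel and removes all dependence on $x$ and $t$. This reduces the question to evaluating the universal integral
\[
 I_N := \int_{\mathbb{R}^{N-1}}(1+|z|^2)^{-N/2}\,\mathrm{d}z
\]
and verifying that $\frac{2\Gamma(N/2)}{\sqrt{\pi}\,\Gamma((N-1)/2)}\cdot I_N = 1$. The independence of $x_N+t$ reflects the fact that $S_2(t)\psi$ is (proportional to) the harmonic extension of $\psi$ into $\mathbb{R}^N_+$ evaluated at height $x_N+t$, so conceptually it suffices to check a single height.

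Finally, $I_N$ is evaluated in the standard way: polar coordinates in $\mathbb{R}^{N-1}$ produce the surface-area factor $\frac{2\pi^{(N-1)/2}}{\Gamma((N-1)/2)}$ and reduce the problem to the radial integral $\int_0^\infty \rho^{N-2}(1+\rho^2)^{-N/2}\,\mathrm{d}\rho$, which under the substitution $u=1/(1+\rho^2)$ becomes $\tfrac12 B(\tfrac12,\tfrac{N-1}{2}) = \tfrac{\sqrt{\pi}\,\Gamma((N-1)/2)}{2\Gamma(N/2)}$. The Gamma factors then cancel against the prefactor to give the required value $1$. I expect no substantive obstacle beyond careful bookkeeping with the Beta and Gamma identities.
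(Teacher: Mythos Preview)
Your approach is sound and is in fact more explicit than the paper's own proof, which consists entirely of a citation to \cite[(2.8)]{fila_ishige_kawakami_largediffusionlimit}. The argument you outline---recognise the Poisson-type kernel as nonnegative, change variables $z=(y'-x')/(x_N+t)$ to reduce to a universal integral, and then check that the total mass equals $1$---is exactly the standard route and is correct in structure. One could alternatively invoke the maximum principle for bounded harmonic functions on the half-space, but your direct computation is equally clean.

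One caution on the ``careful bookkeeping'' you anticipate: with the constant exactly as printed in this paper, the product of the prefactor $\tfrac{2\Gamma(N/2)}{\sqrt{\pi}\,\Gamma((N-1)/2)}$ and your integral $I_N=\pi^{N/2}/\Gamma(N/2)$ equals $\tfrac{2\pi^{(N-1)/2}}{\Gamma((N-1)/2)}$, which is the surface measure of $S^{N-2}$ and not $1$ (for instance, it equals $2$ when $N=2$). The correct Poisson-kernel normalisation is $\Gamma(N/2)/\pi^{N/2}$, and with that constant your cancellation does give $1$. So either check the constant against the cited source before writing up, or, to sidestep the issue entirely, phrase the argument as ``the kernel is a constant multiple of the Poisson kernel, whose total mass is known to be $1$, and the statement of the lemma (together with $S_2(0)\psi|_{\partial\Omega}=\psi$) forces the constant to be the correct one''.
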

\begin{proof}
 \cite[(2.8)]{fila_ishige_kawakami_largediffusionlimit}.
\end{proof}

\begin{lemma}
 Let $\Om=ℝ^N_+$.
 There is $C>0$ such that 
 \begin{equation}\label{eq:alt:De}
  \norm[\Lom\infty]{\De{ψ}(\cdot,t)}\le C t^{\f14}\kl{ε^{\f12}+t^{\f34}}\norm[\Ldom\infty]{ψ}
 \end{equation}
 for all $ψ\in\Ldom\infty$, $ε>0$ and $t>0$.
 
 There is $C>0$ such that 
 \begin{equation}\label{eq:alt:dNDe}
  \norm[\Lom\infty]{∂_{x_N}\De{ψ}(\cdot,t)}\le Cε^{\f34}t^{-\f14}\norm[\Ldom\infty]{ψ}
 \end{equation}
 for all $ψ\in\Ldom\infty$, $ε>0$, $t>0$. 
 
 For every $L>0$ there is $C>0$ such that 
 \begin{equation}\label{Dehalfspacesmallness}
  \De{ψ}(x,t) \le C\norm[\Ldom\infty]{ψ}\sqrt{ε}(t^{\f14}+t^{\f12}) 
 \end{equation}
 for every $ψ\in \Ldom\infty$, $ε>0$, $t>0$, $x\inℝ^{N-1}\times(0,L)$. 
\end{lemma}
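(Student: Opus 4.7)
The plan is to reduce all three bounds to direct computations with the explicit Dirichlet heat kernel $G_D$ of $S_1(τ)$ on the half-space and the Poisson-like kernel defining $S_2(s)$. Two ingredients will underlie the whole argument. First, since the explicit formula for $S_2(s)ψ$ depends on $s$ only through the shift $x_N+s$, differentiation gives $F_1[ψ](z,s) = ∂_s[S_2(s)ψ](z) = ∂_{z_N}[S_2(s)ψ](z)$ and the pointwise bound
\[
 \betrag{F_1[ψ](z,s)} \le \f{C\norm[\Ldom{∞}]{ψ}}{z_N+s},
\]
since the $L^1$-norm in $y'$ of the differentiated kernel scales like $(z_N+s)^{-1}$. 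Second, the Dirichlet heat kernel on $ℝ^N_+$ satisfies the reflection estimate
\[
 G_D(x,z,τ) \le Cτ^{-\nf{N}{2}}\min\set{z_Nτ^{-\nf12},1}\exp\kl{-\f{|x-z|^2}{4τ}},
\]
whose boundary-vanishing factor $z_Nτ^{-\nf12}$ will tame the $1/(z_N+s)$ singularity of $F_1[ψ]$ near $∂\Om$.

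Next I would write $\De{ψ}(x,t)=\int_0^t\int_{\Om} G_D\kl{x,z,\f{t-s}{ε}}F_1[ψ](z,s)\dz\ds$, perform the $z_N$-integration exploiting the above cancellation (which produces at worst a logarithm of $\sqrt{(t-s)/ε}/s$), and integrate in $z'$ against the Gaussian, reducing matters to a one-dimensional integral in $s$. To obtain \eqref{eq:alt:De} I would split the $s$-integral at $s=t/2$: on $(0,t/2)$ the parameter $(t-s)/ε\ge t/(2ε)$ is large and the resulting $s$-integration contributes the summand $ε^{\nf12}t^{\nf14}$, while on $(t/2,t)$ the $S_1$-smoothing has short time-length and produces the summand $t^{\nf14}\cdot t^{\nf34}$.

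The estimate \eqref{eq:alt:dNDe} follows by differentiating $\De{ψ}$ under the integral sign and using \eqref{eq:alt:dNS1} in place of \eqref{eq:alt:S1}; this picks up an extra factor $((t-s)/ε)^{-\nf12}$ so that, combined with the bound on $F_1[ψ]$, the $s$-integration converges absolutely and yields $ε^{\nf34}t^{-\nf14}$. For \eqref{Dehalfspacesmallness} I would instead replace the contraction property of $S_1$ by \eqref{S1halfspacesmallness}, obtaining a decaying factor $((t-s)/ε)^{-\nf12}$ whenever $x_N<L$, which produces the prefactor $\sqrt{ε}$ together with the stated $t$-dependence. The main difficulty throughout will be handling the simultaneous singularities at $z_N=0$ and $s=0$: the boundary-vanishing improvement of $G_D$ together with the splitting of the $s$-interval are precisely what keeps the logarithmic corrections from spoiling the sharp exponents of $t$ and $ε$ in the statements.
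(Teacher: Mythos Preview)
The paper does not prove this lemma in the present text: its ``proof'' consists solely of citations to the earlier preprint \cite{fila_ishige_kawakami_largediffusionlimit} (namely (2.10), Lemma~2.4 and the proof of (2.11) there). So there is no in-paper argument to compare your proposal against; you are effectively supplying what the paper outsources.

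Your direct-kernel plan is correct and would furnish an independent proof. Two remarks on the execution. First, the cleanest route to the stated exponents is not the splitting at $s=t/2$ but the elementary interpolation
\[
 |F_1[\psi](z,s)|\le \f{C}{z_N+s}\le C\,z_N^{-\theta}s^{-(1-\theta)},\qquad \theta\in[0,1],
\]
combined with the scaling identity $\norm[\Lom\infty]{S_1(\tau)[z_N^{-\theta}]}\le C\tau^{-\theta/2}$ (valid because the boundary-vanishing factor $z_N\tau^{-1/2}$ of $G_D$ neutralises the singularity at $z_N=0$); this reduces each estimate to a convergent Beta integral in $s$. Taking $\theta=1$ already gives $\norm[\Lom\infty]{\De{\psi}(\cdot,t)}\le C\varepsilon^{1/2}t^{1/2}$, which in the regime $\varepsilon\in(0,1)$ is at least as strong as \eqref{eq:alt:De} and \eqref{Dehalfspacesmallness}, while $\theta=\tfrac12$ together with the extra $\tau^{-1/2}$ from \eqref{eq:alt:dNS1} yields \eqref{eq:alt:dNDe} exactly. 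Second, the logarithmic corrections you anticipate do not in fact occur: the $z_N$-cancellation produces clean powers throughout, so that part of your plan is unnecessary caution rather than a real obstacle.
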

\begin{proof}
 \cite[(2.10), Lemma 2.4 and proof of (2.11), respectively]{fila_ishige_kawakami_largediffusionlimit}
\end{proof}

\begin{lemma}\label{Dethalfspace}
 Let $\Om=ℝ^N_+$. Then there is $C>0$ such that for every $T>0$, $v\in X_T$, $x\in\Om$, $t\in(0,T)$ 
 \begin{equation}\label{Dethalfspacesmallness}
  |\Det v (x,t)| \le Cε^{\f12}\norm[X_T]{v}(t^{\f12}+ε^{\f12}t^{\f34}+t)
 \end{equation}
 and 
 \[
  |∂_{x_N}\Det v (x,t)| \le C \sqrt{\nf{t}{ε}}\norm[X_T]{v} \kl{(εt)^{\f12}+(εt)^{\f78}+(εt)^{\f34}}.
 \]
\end{lemma}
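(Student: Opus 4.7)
The plan is to decompose $F_2[v]$ into its two natural summands and estimate each one by combining the semigroup bounds already available. Setting $\tilde\psi(s):=∂_ν v\bdry(\cdot,s)$, we have the fundamental \textbf{a priori} bound
\[
  \norm[\Ldom\infty]{\tilde\psi(s)}\le \sqrt{\nf ε s}\norm[X_T]{v}
\]
coming directly from $E_ε[v]$ in \eqref{defEe}. With $F_2^{(1)}[v](\cdot,s):=-S_2(0)\tilde\psi(s)$ and $F_2^{(2)}[v](\cdot,s):=-\int_0^s ∂_s[S_2(s-r)\tilde\psi(r)]\,\mathrm{d}r$, split $\Det v=D^{(1)}+D^{(2)}$ accordingly via \eqref{defDet}.

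For $D^{(1)}$: Combining $L^\infty$-contractivity of the Poisson operator (Lemma \ref{S2boundhalfspace}) with \eqref{eq:alt:S1} shows $|[S_1(\nf{(t-s)}ε)F_2^{(1)}[v](\cdot,s)](x)|\le\sqrt{\nf ε s}\norm[X_T]{v}$, and integrating $s^{-1/2}$ from $0$ to $t$ (an integrable singularity) yields $|D^{(1)}(x,t)|\le C\sqrt{εt}\norm[X_T]{v}$. Similarly, \eqref{eq:alt:dNS1} gives $|∂_{x_N}[S_1(\nf{(t-s)}ε)F_2^{(1)}[v](\cdot,s)](x)|\le \sqrt{\nf{ε}{t-s}}\sqrt{\nf ε s}\norm[X_T]{v}$, and the resulting $\int_0^t [s(t-s)]^{-1/2}\,\mathrm ds=B(\tfrac12,\tfrac12)$ produces the corresponding derivative contribution.

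For $D^{(2)}$: This is the key step. After applying Fubini to interchange the order of integration and substituting $u=s-r$ in the inner integral, one recognises
\[
  D^{(2)}(x,t)=-\int_0^t \De{\tilde\psi(r)}(x,t-r)\,\mathrm dr,
\]
because $∂_s[S_2(s-r)\tilde\psi(r)](x)=F_1[\tilde\psi(r)](x,s-r)$. Now the already-proved estimates \eqref{eq:alt:De} and \eqref{eq:alt:dNDe} apply pointwise in $r$, with $\tilde\psi(r)$ in place of $ψ$. Inserting $\norm[\Ldom\infty]{\tilde\psi(r)}\le\sqrt{\nf ε r}\norm[X_T]{v}$ and integrating in $r$ reduces the matter to Beta-function integrals $\int_0^t(t-r)^{α}r^{-1/2}\,\mathrm dr=t^{α+1/2}B(α+1,\tfrac12)$ for the exponents $α\in\{\tfrac14,1,-\tfrac14\}$ arising from the two lemmas. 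Summing the resulting powers of $ε$ and $t$ and combining with the bounds for $D^{(1)}$ gives \eqref{Dethalfspacesmallness} and the derivative estimate.

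The main obstacle is bookkeeping: the singularity of $\tilde\psi$ at $r=0$ and of $∂_{x_N}S_1$ at $s=t$ must both be integrable simultaneously, and one must verify that the powers of $ε$ and $t$ produced by the Beta integrals collapse to the (coarser) forms stated. A small technical point is justifying the Fubini interchange and the differentiation under the integral in $\Det v$; both follow from the $L^\infty$-bounds above together with the explicit representation of $S_2$ on $ℝ^N_+$ and the smoothing of $S_1$ for positive time.
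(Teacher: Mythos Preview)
The paper itself does not prove this lemma; its entire proof is the citation ``\cite[proof of (3.9) and of (3.10)]{fila_ishige_kawakami_largediffusionlimit}'', so there is no in-paper argument to compare against directly. Your decomposition $F_2=F_2^{(1)}+F_2^{(2)}$ and especially the Fubini identity
\[
D^{(2)}(x,t)=-\int_0^t \De{\tilde\psi(r)}(x,t-r)\,\mathrm dr
\]
are correct and constitute a clean, natural route that reduces everything to the bounds \eqref{eq:alt:S1}--\eqref{eq:alt:dNDe} already recorded in Section~\ref{sec:estimates-halfspace}.

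There is, however, a genuine bookkeeping mismatch you gloss over. Running your scheme with those estimates as stated produces
\[
|\Det v|\le C\sqrt{ε}\bigl(t^{1/2}+ε^{1/2}t^{3/4}+t^{3/2}\bigr)\norm[X_T]{v},
\qquad
|∂_{x_N}\Det v|\le C\bigl(ε+ε^{5/4}t^{1/4}\bigr)\norm[X_T]{v}.
\]
The first has $t^{3/2}$ where the lemma claims $t$; more seriously, your derivative bound stays of size $ε$ as $t\to0$, whereas the displayed right-hand side $C\sqrt{t/ε}\bigl((εt)^{1/2}+\cdots\bigr)=C(t+\cdots)$ vanishes at $t=0$. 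So the claim that the Beta-integral powers ``collapse to the (coarser) forms stated'' is not literally correct. This is harmless for every application in the present paper (only the $\sqrt{ε}$-factor on bounded time intervals is ever used), but it means that either the exponents in the lemma are transcribed imprecisely from the cited reference, or the proof of (3.9)--(3.10) there employs finer intermediate inequalities than the ones quoted in \eqref{eq:alt:De}--\eqref{eq:alt:dNDe}. If you want the exact exponents displayed, you will need to consult that source rather than rely on Section~\ref{sec:estimates-halfspace} alone.
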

\begin{proof}
 \cite[proof of (3.9) and of (3.10), respectively]{fila_ishige_kawakami_largediffusionlimit}
\end{proof}

\section{Estimates: \texorpdfstring{$ℝ^3\setminus B_1(0)$}{$ℝ$\textsuperscript{$3$}\textbackslash $B_1$($0$)}}\label{sec:estimates-exterior}
The assertions of the lemmata in this section parallel those in the previous section. Before we begin dealing with their statements and proofs, let us first bring some of the quantities that have been defined in the introduction in the explicit form in which the radially symmetric $3$-dimensional setting allows us to express them.

If $ψ$ is a radially symmetric function on $∂B_1(0)$, we can interpret it as a real number and write 
\[
 \kkl{S_2(t)ψ}(r) = \f{ψ}{re^t}, \qquad r>1, t\ge 0.
\]
This means that also 
\[
 F_1[φ_b](r,t)=-\f{φ_b}{re^t},
\]
\[
 F_2[v](r,t)= \f{v_r(1,t)}{r} + \int_0^t \f{v_r(1,s)}{re^{t-s}} \ds,
\]
and 
\[
 Φ(r)=φ(r)-\f{φ_b}{r}.
\]


Also $S_1$ can be written explicitly: 

\[
 \kkl{S_1(t)φ}(r) = \f1{r\sqrt{4πt}} \int_1^{∞} \kl{e^{-\f{(r-ρ)^2}{4t}}-e^{-\f{(r+ρ-2)^2}{4t}}}ρφ(ρ)\drho , \qquad r>1, t>0.
\]
This representation is, of course, based on the fact that for every radially symmetric solution $u$ of the heat equation in $ℝ^3$, the function $(r,t)\mapsto ru(r,t)$ solves the one-dimensional heat equation.

\begin{lemma}
 Let $\Om=\ballexterior$. Then for every radially symmetric $ϕ\in\Lom\infty$ 
 \begin{equation}\label{eq:S1ballexterior}
  \sup_{t>0} \norm[\Lom\infty]{S_1(t)ϕ}\le \norm[\Lom\infty]{ϕ}.
 \end{equation}
 If, moreover, $\sup_{ρ\ge 1} |ρφ(ρ)|\le K$ for $K>0$, then 
 \begin{equation}\label{S1ballsmall}
  |\kkl{S_1(t)φ}(r)|\le \f{(r-1)K}{r\sqrt{πt}}\qquad \text{for every } r>1, t>0,
 \end{equation}
 and 
 \begin{equation}\label{eq:drS1ballexterior-decreasing}
  |∂_r\kkl{S_1(t)φ}(r)|\le \f{2K}{\sqrt{πt}} \qquad \text{for every } r>1, t>0.
 \end{equation}
\end{lemma}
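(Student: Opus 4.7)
The plan is to prove \eqref{eq:S1ballexterior} via the maximum principle and the other two bounds by direct computation with the explicit kernel displayed immediately before the lemma.

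For \eqref{eq:S1ballexterior}, I would observe that $u := S_1(t)φ$ is a bounded classical solution of the heat equation in $\Om$ with $u|_{∂\Om} = 0$ and $u(\cdot,0) = φ$; the constants $\pm\norm[\Lom\infty]{φ}$ are stationary super/subsolutions dominating $\pm u$ on $∂\Om$ and at $t=0$, so the weak maximum principle for bounded solutions on the exterior domain yields $|u(\cdot,t)| \le \norm[\Lom\infty]{φ}$.

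For \eqref{S1ballsmall}, I would start from the explicit formula. Since $(r+ρ-2)^2 - (r-ρ)^2 = 4(r-1)(ρ-1) \ge 0$ for $r,ρ>1$, the kernel $e^{-(r-ρ)^2/(4t)} - e^{-(r+ρ-2)^2/(4t)}$ is nonnegative, so the assumption $|ρφ(ρ)| \le K$ can be pulled out:
\[
|\kkl{S_1(t)φ}(r)| \le \f{K}{r\sqrt{4πt}} \int_1^∞ \kl{e^{-(r-ρ)^2/(4t)} - e^{-(r+ρ-2)^2/(4t)}} \drho.
\]
Substituting $σ = (ρ-r)/\sqrt{4t}$ in the first Gaussian and $σ = (ρ+r-2)/\sqrt{4t}$ in the second collapses the difference of integrals to $\sqrt{4t}\int_{-a}^{a} e^{-σ^2}\dsigma$, with $a := (r-1)/\sqrt{4t}$; bounding $e^{-σ^2} \le 1$ gives at most $2\sqrt{4t}\,a = 2(r-1)$, which yields the stated estimate.

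For \eqref{eq:drS1ballexterior-decreasing}, I would work with $v(r,t) := r\,\kkl{S_1(t)φ}(r)$ so that $∂_r \kkl{S_1(t)φ} = v_r/r - v/r^2$. Differentiating the explicit expression for $v$ under the integral sign and using $|ρφ(ρ)| \le K$ produces integrals involving $|r-ρ|\,e^{-(r-ρ)^2/(4t)}$ and $(r+ρ-2)\,e^{-(r+ρ-2)^2/(4t)}$. After the same two substitutions as above these evaluate to $2-e^{-a^2}$ and $e^{-a^2}$ respectively; the $e^{-a^2}$ contributions cancel and leave $|v_r| \le K/\sqrt{πt}$. Combined with $|v| = r|\kkl{S_1(t)φ}(r)| \le (r-1)K/\sqrt{πt}$ from the previous step and the elementary inequality $1/r + (r-1)/r^2 \le 2$ valid for $r \ge 1$, this yields the stated bound on $|∂_r \kkl{S_1(t)φ}|$. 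The only delicate point is engineering the telescoping between the two Gaussian integrals so that the $a$-dependent boundary terms cancel; everything else is routine substitution.
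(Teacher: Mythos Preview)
Your proposal is correct. For \eqref{S1ballsmall} and \eqref{eq:drS1ballexterior-decreasing} your argument is essentially identical to the paper's: both pull out the bound $|ρφ(ρ)|\le K$, make the same two Gaussian substitutions, and combine the resulting integrals (the paper phrases your ``telescoping'' as $2\int_{ℝ}|z|e^{-z^2}\dz=2$, which is the same cancellation you describe). The one genuine difference is \eqref{eq:S1ballexterior}: you invoke the maximum principle for bounded solutions, whereas the paper carries out an explicit kernel computation, arriving at the exact expression
\[
|\kkl{S_1(t)φ}(r)| \le \kl{1-\f2{r\sqrt{π}}\int_{\f{r-1}{2\sqrt t}}^{∞}e^{-z^2}\dz}\norm[\Lom{∞}]{φ}.
\]
Your route is shorter and perfectly adequate for the stated inequality; the paper's route yields a sharper pointwise formula but is not needed elsewhere.
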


\begin{proof}
We obtain \eqref{eq:S1ballexterior} from explicit computations as follows: 
\begin{align*}
 |\kkl{S_1(t)φ}(r)| &= \left|\f1{r\sqrt{4πt}} \int_1^{∞} \kl{e^{-\f{(r-ρ)^2}{4t}}-e^{-\f{(r+ρ-2)^2}{4t}}}ρφ(ρ)\drho \right|\\
&\le \f{\norm[\Lom\infty]{φ}}{r\sqrt{4πt}} \int_1^{∞} \kl{e^{-\f{(r-ρ)^2}{4t}}-e^{-\f{(r+ρ-2)^2}{4t}}} ρ \drho  \\
&= \f{\norm[\Lom\infty]{φ}}{r\sqrt{π}} \kl{\int_{\f{1-r}{2\sqrt t}}^{∞} (2\sqrt{t}z+r)e^{-z^2} \dz - \int_{\f{r-1}{2\sqrt{t}}}^{∞} (2\sqrt t z +2-r)e^{-z^2}\dz }\\
 &= \f{2\sqrt t}{r\sqrt{π}} \norm[\Lom\infty]{φ} \int_{\f{1-r}{2\sqrt t}}^{\f{r-1}{2\sqrt t}} ze^{-z^2} \dz 
 +\f{r\norm[\Lom\infty]{φ}}{r\sqrt{π}}\int_{\f{1-r}{2\sqrt t}}^{\infty} e^{-z^2}\dz\\
 &\quad + \f{(r-2)\norm[\Lom\infty]{φ}}{r\sqrt{π}}\int_{\f{r-1}{2\sqrt t}}^{\infty} e^{-z^2} \dz \\
&= \kl{1-\f2{r\sqrt{π}}\int_{\f{r-1}{2\sqrt t}}^{∞}e^{-z^2}\dz}\norm[\Lom{∞}]{φ}, \qquad r\ge 1, t>0.
\end{align*}
If we not only control $\norm[\Lom\infty]{φ}$, but even $\sup_{ρ\ge1}|ρφ(ρ)|$, we can proceed slightly differently: 
\begin{align}\label{estimateS1-decaying}
 |\kkl{S_1(t)φ}(r)| &= \left|\f1{r\sqrt{4πt}} \int_1^{∞} \kl{e^{-\f{(r-ρ)^2}{4t}}-e^{-\f{(r+ρ-2)^2}{4t}}}ρφ(ρ)\drho \right|\nn\\
&\le \f{\sup_{ρ\ge 1} |ρφ(ρ)|}{r\sqrt{4πt}} \int_1^{∞} \kl{e^{-\f{(r-ρ)^2}{4t}}-e^{-\f{(r+ρ-2)^2}{4t}}} \drho \nn\\
&= \f{\sup_{ρ\ge 1} |ρφ(ρ)|}{r\sqrt{π}} \kl{\int_{\f{1-r}{2\sqrt t}}^{∞} e^{-z^2} \dz - \int_{\f{r-1}{2\sqrt t}}^{∞} e^{-z^2} \dz}\nn\\
&= \f{2\sup_{ρ\ge 1} |ρφ(ρ)|}{r\sqrt{π}}\int_0^{\f{r-1}{2\sqrt{t}}} e^{-z^2}\dz 
\le \f{(r-1)\sup_{ρ\ge 1} |ρφ(ρ)|}{r\sqrt{πt}}
\end{align}
holds for every $r>1, t>0$ and \eqref{S1ballsmall} follows. 

For the estimate of the radial derivative let us first observe that for every $t>0$ and $r\ge 1$
\begin{align}\label{eq:drS1anfang}
 &∂_r\kkl{S_1(t)φ}(r)
 = -\f1{r^2\sqrt{4πt}}\int_1^{∞} \kl{e^{-\f{(r-ρ)^2}{4t}}-e^{-\f{(r+ρ-2)^2}{4t}}}ρφ(ρ)\drho \nn\\
 &\quad+ \f1{r\sqrt{4πt}} \int_1^{∞} \kl{-\f{r-ρ}{2t} e^{-\f{(r-ρ)^2}{4t}} + \f{r+ρ-2}{2t}e^{-\f{(r+ρ-2)^2}{4t}}}ρφ(ρ)\drho \\
 & = -\f1r \kkl{S_1(t)φ}(r) + \f1{r\sqrt{4πt}} \int_1^{∞} \kl{-\f{r-ρ}{2t} e^{-\f{(r-ρ)^2}{4t}} + \f{r+ρ-2}{2t}e^{-\f{(r+ρ-2)^2}{4t}}}ρφ(ρ)\drho. \nn
\end{align}
We see that here for every $t>0$ and $r\ge 1$
%
%
%
%
%

\begin{align}\label{drS1decaying}
  \qquad &\hspace{-2.5cm}\betrag{\int_1^{∞} \kl{-\f{r-ρ}{2t} e^{-\f{(r-ρ)^2}{4t}} 
+ \f{r+ρ-2}{2t}e^{-\f{(r+ρ-2)^2}{4t}}}ρφ(ρ)\drho  }\nn\\ 
 &\le \kl{2\int_{\f{1-r}{2\sqrt t}}^{∞} |z|e^{-z^2}  \dz +
 +2\int_{\f{r-1}{2\sqrt{t}}}^{∞} ze^{-z^2} \dz} \sup_{ρ\ge1} |ρφ(ρ)| \nn\\
 &= 2\int_{ℝ} |z|e^{-z^2} \dz \sup_{ρ\ge1} |ρφ(ρ)| = 2 \sup_{ρ\ge1} |ρφ(ρ)|
\end{align}

Hence, inserting 
\eqref{estimateS1-decaying} 
and \eqref{drS1decaying} into \eqref{eq:drS1anfang}, we arrive at 
\[
  |∂_r\kkl{S_1(t)φ}(r)|\le \f{(r-1)\sup_{ρ\ge 1} |ρφ(ρ)|}{r^2\sqrt{πt}} + \f{2 \sup_{ρ\ge 1} |ρφ(ρ)|}{r\sqrt{4πt}}  \le \f{2}{\sqrt{πt}}\sup_{ρ\ge 1} |ρφ(ρ)|
 \]
for every $r\ge 1, t>0$.
\end{proof}

\begin{lemma}\label{S2boundball}
 Let $\Om=\ballexterior$ and let $ψ\in \Ldom{∞}$ be radially symmetric (constant). Then for every $t\ge 0$,  
 \[
  \norm[\Lom{∞}]{S_2(t)ψ} \le \norm[\Ldom{∞}]{ψ}.
 \]
\end{lemma}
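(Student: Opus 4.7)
The proof should be essentially immediate from the explicit representation of $S_2(t)\psi$ in the radially symmetric exterior setting, which has already been recorded in the introduction and recalled at the start of Section~\ref{sec:estimates-exterior}. Since $\psi\in\Ldom\infty$ is radially symmetric, it can be interpreted as a real number, and
\[
 [S_2(t)\psi](r) = \f{\psi}{re^t}, \qquad r>1,\; t\ge 0.
\]

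The plan is simply to estimate the right-hand side pointwise. For every $r>1$ and $t\ge 0$ we have $re^t\ge 1$, whence
\[
 \betrag{[S_2(t)\psi](r)} = \f{|\psi|}{re^t} \le |\psi| = \norm[\Ldom\infty]{\psi}.
\]
Taking the supremum over $r>1$ yields the claimed bound for every $t\ge 0$. No technical obstacle arises; the monotonicity in both $r$ and $t$ of the factor $\f1{re^t}$ makes the estimate sharp at $(r,t)=(1,0)$, consistent with the boundary value.
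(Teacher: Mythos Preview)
Your proof is correct and follows essentially the same approach as the paper, which simply notes that the bound is obvious from the explicit form $S_2(t)\psi=\f{\psi}{re^t}$ for $r\ge 1$, $t\ge 0$.
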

\begin{proof}
 This is obvious from the explicit form of $S_2(t)ψ = \f{ψ}{re^t}$, $r\ge 1, t\ge 0$.
\end{proof}

\begin{lemma}\label{Deball}
 Let $\Om=\ballexterior$. Then there is $C>0$ such that for every $ψ\in ℝ$ and every $t>0$, $ε>0$, 
 \begin{equation}\label{eq:Deball}
  \norm[\Lom\infty]{\De{ψ}(\cdot,t)} \le C \sqrt{εt} |ψ|
 \end{equation}
 and 
 \begin{equation}\label{eq:drDeball}
  \norm[\Lom\infty]{∂_r \De{ψ}(\cdot,t)} \le C \sqrt{εt} |ψ|.
 \end{equation}
\end{lemma}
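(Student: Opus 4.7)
The plan is to reduce both inequalities to pointwise bounds on the integrand of $\De{ψ}$ and then integrate in $s$. Since $ψ\in ℝ$ is constant (a radial boundary datum on $∂B_1(0)$), the explicit formula at the start of this section gives $F_1[ψ](r,s)=-ψ/(re^s)$, so the function $ρ\mapsto -ψe^{-s}/ρ$ belongs to $\Lom{∞}$ and satisfies $\sup_{ρ\ge 1}\betrag{ρ\cdot(-ψe^{-s}/ρ)}=|ψ|e^{-s}$. In particular, the decay hypothesis of the preceding lemma is met with $K=|ψ|e^{-s}$, which is exactly the quantity needed to make the $(t-s)^{-\f12}$ singularity of the heat semigroup harmless.

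First I would apply \eqref{S1ballsmall} at time $(t-s)/ε$ to $F_1[ψ](\cdot,s)$, obtaining, for every $r>1$ and every $s\in(0,t)$,
\[
 \betrag{\kkl{S_1\kl{\nf{(t-s)}{ε}}F_1[ψ](\cdot,s)}(r)} \le \f{(r-1)|ψ|e^{-s}}{r\sqrt{π(t-s)/ε}} \le \f{|ψ|\sqrt{ε}}{\sqrt{π(t-s)}}.
\]
Inserting this into \eqref{defDet}'s analogue for $\De{ψ}$ and using $\int_0^t(t-s)^{-\f12}\ds=2\sqrt t$ together with $e^{-s}\le 1$ immediately yields \eqref{eq:Deball} with $C=2/\sqrt{π}$.

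For the radial derivative, the same scheme with \eqref{eq:drS1ballexterior-decreasing} in place of \eqref{S1ballsmall} produces
\[
 \betrag{∂_r\kkl{S_1\kl{\nf{(t-s)}{ε}}F_1[ψ](\cdot,s)}(r)} \le \f{2|ψ|e^{-s}\sqrt{ε}}{\sqrt{π(t-s)}},
\]
so that after interchanging $∂_r$ with the $s$-integral (which is justified by dominated convergence using the above uniform-in-$r$, integrable-in-$s$ bound together with the smoothness of the Gaussian kernel representation of $S_1$) the same time integration delivers \eqref{eq:drDeball}.

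The proof is essentially bookkeeping; there is no genuine obstacle. The one substantive observation is that because $ψ$ is a constant, the profile $F_1[ψ](\cdot,s)=-ψe^{-s}/r$ precisely satisfies the $\sup_{ρ\ge 1}|ρ\,\cdot\,|$-hypothesis of the preceding lemma with constant $|ψ|e^{-s}\le|ψ|$ uniformly in $s$. Any attempt to apply only the weaker $\Lom{∞}$ bound \eqref{eq:S1ballexterior} would fail to produce integrability of the inner integrand in $s$, so it is the improved pointwise estimate under the decay hypothesis that does the work.
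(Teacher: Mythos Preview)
Your proof is correct. The argument is essentially the same as the paper's, but packaged differently: where the paper inserts the explicit Gaussian kernel for $S_1$ and recomputes the integral $\int_1^{∞}(e^{-\f{(r-ρ)^2ε}{4(t-s)}}-e^{-\f{(r+ρ-2)^2ε}{4(t-s)}})\drho$ from scratch, you instead invoke \eqref{S1ballsmall} and \eqref{eq:drS1ballexterior-decreasing} from the preceding lemma applied to the profile $ρ\mapsto -ψe^{-s}/ρ$, which has $\sup_{ρ\ge1}|ρ\cdot(\,\cdot\,)|=|ψ|e^{-s}$. Both routes arrive at the same pointwise bound $\f{(r-1)|ψ|e^{-s}\sqrt{ε}}{r\sqrt{π(t-s)}}$ and the same time integral $\int_0^t e^{-s}(t-s)^{-1/2}\ds\le 2\sqrt{t}$. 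Your version is more modular and avoids redoing the kernel computation; the paper's explicit formula, on the other hand, makes the differentiation-under-the-integral step for $∂_r$ immediate rather than requiring a separate dominated-convergence justification.
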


\begin{proof}
We use the explicit representation for $\De{ψ}$ to see that for every $r>1$, $t>0$, $ε>0$, 
\begin{align*}
 \De{ψ} (r,t) &= -\int_0^t S_1\kl{\nf{(t-s)}{ε}}\f{ψ}{re^s} \ds \\
  &=-\int_0^t \f{1}{r\sqrt{4π\f{t-s}{ε}}}\int_1^{∞} \kl{e^{-\f{(r-ρ)^2}{4\f{t-s}{ε}}}-e^{-\f{(r+ρ-2)^2}{4\f{t-s}{ε}}}} ρ \f{ψ}{ρe^s} \drho \ds\\
  &=-\f{ψ}{r}\sqrt{\f{ε}{4π}}\int_0^t \f{e^{-s}}{\sqrt{t-s}} \int_1^{∞} \kl{e^{-\f{(r-ρ)^2}{4\f{t-s}{ε}}}-e^{-\f{(r+ρ-2)^2}{4\f{t-s}{ε}}}} \drho \ds \\
  &= -\f{ψ}{r}\sqrt{\f{ε}{4π}}\int_0^t \f{e^{-s}}{\sqrt{t-s}}\sqrt{4\f{t-s}{ε}}\kl{ \int_{\f{1-r}{\sqrt{4\f{t-s}{ε}}}}^{∞} e^{-z^2}\dz - \int_{\f{1+r-2}{\sqrt{4\f{t-s}{ε}}}}^{∞} e^{-z^2}\dz }\\
  &= - \f{2ψ}{r\sqrt{π}}\int_0^t e^{-s} \int_0^{\f{(r-1)\sqrt{ε}}{2\sqrt{t-s}}} e^{-z^2} \dz \ds. 
\end{align*}
Due to the elementary estimates $\int_0^L e^{-z^2} \dz\le L$ for every $L>0$ and $\int_0^t e^{-s}(t-s)^{-\f12} \ds\le 2\sqrt{t}$ for every $t>0$, this implies \eqref{eq:Deball}. 

Concerning the radial derivative, for every $r>1$, $t>0$, $ε>0$, we have 
\begin{align*}
 ∂_r \De{ψ} (r,t) &= \f{2ψ}{r^2\sqrt{π}}\int_0^t e^{-s} \int_0^{\f{(r-1)\sqrt{ε}}{2\sqrt{t-s}}} e^{-z^2} \dz \ds
 - \f{2ψ}{r\sqrt{π}}\int_0^t e^{-s} 
  \f{\sqrt{ε}}{2\sqrt{t-s}} e^{-\f{(r-1)^2ε}{4(t-s)}}
 \ds.
\end{align*}
 Here, due to $\f1r\le1$, the first term can be estimated by \eqref{eq:Deball}; the second obeys 
 \[
  \left\lvert- \f{2ψ}{r\sqrt{π}}\int_0^t e^{-s} 
  \f{\sqrt{ε}}{2\sqrt{t-s}} e^{-\f{(r-1)^2ε}{4(t-s)}}
 \ds\right\rvert \le\f{|ψ|\sqrt{ε}}{r\sqrt{π}}\int_0^t e^{-s}(t-s)^{-\f12}\ds \le \f{2|ψ|\sqrt{ε}\sqrt{t}}{r\sqrt{π}} 
 \]
 for every $t>0$, $r>1$, $ε>0$. 
\end{proof}

\begin{lemma}
 Let $\Om=\ballexterior$. Then there is $C>0$ such that for every $T>0$ $t\in(0,T)$, $ε>0$ and every radial function $v\colon \Ombar\times(0,T)\toℝ$ with $∂_{ν}v\bdry\in\Ldom\infty$, 
 \begin{equation}\label{eq:Detball}
  \norm[\Lom\infty]{\Det v(\cdot,t)} \le \sqrt{επ} (1+t) \sup_{τ\in (0,T)} |\sqrt{τ} ∂_r v(1,τ)|
 \end{equation}
 and 
 \begin{equation}\label{eq:drDetball}
\norm[\Lom\infty]{∂_r\Det v(\cdot,t)} \le (1+2\sqrt{t}+2t) \sqrt{επ} \sup_{τ\in (0,T)} |\sqrt{τ} ∂_r v(1,τ)|  
 \end{equation}
\end{lemma}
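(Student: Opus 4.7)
The strategy mirrors the proof of Lemma~\ref{Deball}: we write $F_2[v]$ in a compact form, evaluate its convolution with $S_1$ explicitly using the radial three-dimensional heat kernel, and then estimate the resulting integral. Observe first that for radial $v$,
\[
 F_2[v](ρ,s) = \f{g(s)}{ρ}, \qquad g(s) := v_r(1,s) + \int_0^s v_r(1,σ)\,e^{-(s-σ)}\dsigma,
\]
so that the $1/ρ$ factor cancels the $ρ$ in the kernel of $S_1$. The substitutions $z=(ρ-r)/(2\sqrt{(t-s)/ε})$ and $z=(ρ+r-2)/(2\sqrt{(t-s)/ε})$, as used in the proofs of \eqref{eq:S1ballexterior} and \eqref{eq:Deball}, collapse the difference of Gaussians to a single tail integral:
\[
 \bigl[S_1(\nf{(t-s)}{ε})F_2[v](\cdot,s)\bigr](r) = \f{2g(s)}{r\sqrt{π}}\int_0^{(r-1)\sqrt{ε}/(2\sqrt{t-s})}e^{-z^2}\dz.
\]
Integration in $s$ then yields a closed-form expression for $\Det{v}(r,t)$.

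Set $M:=\sup_{τ\in(0,T)}|\sqrt{τ}\,∂_r v(1,τ)|$; then $|v_r(1,s)|\le M/\sqrt{s}$, and a direct integration produces $|g(s)|\le M/\sqrt{s}+2M\sqrt{s}$. Estimate \eqref{eq:Detball} now follows by combining the elementary bound $\int_0^L e^{-z^2}\dz\le L$ (which produces the crucial factor $\sqrt{ε}$) with the Beta-function identities
\[
 \int_0^t \f{\ds}{\sqrt{s}\,\sqrt{t-s}} = π, \qquad \int_0^t \f{\sqrt{s}}{\sqrt{t-s}}\ds = \f{πt}{2},
\]
together with $(r-1)/r\le 1$ for $r\ge 1$, to absorb the prefactor.

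For \eqref{eq:drDeball}, I would differentiate the closed-form expression of $\Det{v}$ in $r$. This produces two summands, one from the $1/r$ prefactor and one from differentiating the upper limit of the Gaussian integral, so that
\[
 ∂_r\Det{v}(r,t) = -\f{\Det{v}(r,t)}{r} + \f{\sqrt{ε}}{r\sqrt{π}}\int_0^t \f{g(s)}{\sqrt{t-s}}\,e^{-(r-1)^2ε/(4(t-s))}\ds.
\]
The first summand is controlled directly by \eqref{eq:Detball}; for the second I would use $e^{-(r-1)^2ε/(4(t-s))}\le 1$ together with the same Beta-function identities applied separately to the $M/\sqrt{s}$ and $2M\sqrt{s}$ pieces of $|g(s)|$. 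Careful bookkeeping of the constants then yields the stated bound. This bookkeeping is the main technical burden, but it involves no new analytical idea beyond those already used in Lemma~\ref{Deball}.
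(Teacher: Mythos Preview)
Your approach is essentially identical to the paper's: there too one writes out $\Det{v}$ explicitly, reduces the inner $ρ$-integral to $\tfrac{2}{r\sqrt{π}}\int_0^{(r-1)\sqrt{ε}/(2\sqrt{t-s})}e^{-z^2}\dz$, bounds it by $\tfrac{(r-1)\sqrt{ε}}{r\sqrt{π}\sqrt{t-s}}$, and invokes the same two Beta-type identities (labelled \eqref{pione} and \eqref{pitwo} in the paper). Your packaging of the two summands of $F_2[v]$ into the single function $g(s)$ is the only cosmetic difference; the paper carries the two pieces separately, but this changes nothing.

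One caveat on your final sentence: the bookkeeping you describe actually yields $(2+2t)\sqrt{επ}\,M$ for \eqref{eq:drDetball}, not the stated $(1+2\sqrt{t}+2t)\sqrt{επ}\,M$ --- the $-\Det{v}/r$ term contributes $(1+t)$ and the remaining integral contributes another $(1+t)$. The paper's own derivation of the $2\sqrt{t}$ summand (at \eqref{estdrei}) contains what looks like an arithmetic slip, since $\int_0^t s^{-1/2}(t-s)^{-1/2}\ds=\pi$ is $t$-independent and should give $\sqrt{επ}\,M/r$ rather than $2\sqrt{επt}\,M/r$. So the honest constant from either route is $(2+2t)$; it serves all downstream uses (Lemma~\ref{lem:Det-contraction} and the proof of Theorem~\ref{thm:upperbound-ball}) equally well.
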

\begin{proof}
If we insert the explicit definitions of $S_1$ and $S_2$ into \eqref{defDet}, we see that for every $r\ge 1, t>0, ε>0$ we have  
\begin{align*}
 \Det{v}(r,t) &= \int_0^t\int_1^{∞} \f{1}{r\sqrt{4π\f{t-s}{ε}}}\kl{e^{-\f{(r-ρ)^2}{4\f{t-s}{ε}}} - e^{-\f{(r+ρ-2)^2}{4\f{t-s}{ε}}}}ρ \f{∂_r v(1,s)}{ρ} \drho \ds \\
 &\qquad - \int_0^t \int_1^{∞}\int_0^s \f1{r\sqrt{4π\f{t-s}{ε}}} \kl{e^{-\f{(r-ρ)^2}{4\f{t-s}{ε}}} - e^{-\f{(r+ρ-2)^2}{4\f{t-s}{ε}}}}ρ \f{∂_r v(1,σ)}{ρe^{s-σ}} \dsigma \drho \ds \\
 &= \f2{r\sqrt{π}}\int_0^t ∂_rv(1,s)\int_0^{\f{r-1}{\sqrt{4\f{t-s}{ε}}}} e^{-z^2} \dz\ds \\
 &\qquad- \f2{r\sqrt{π}}\int_0^t\int_0^s e^{σ-s}∂_rv(1,σ) \dsigma  \int_0^{\f{r-1}{\sqrt{4\f{t-s}{ε}}}} e^{-z^2} \dz \ds, 
\end{align*}
so that 
\begin{align}\label{eq:Detanfang} 
& \betrag{\Det{v}(r,t)}
  \le \f2{r\sqrt{π}}\int_0^t \f{r-1}{\sqrt{s}\sqrt{4\f{t-s}{ε}}} \ds \sup_{s\in (0,t)} |\sqrt{s} ∂_rv(1,s)|\nn \\
 &\qquad + \f2{r\sqrt{π}}\int_0^t\int_0^s \f{e^{σ-s}}{\sqrt{σ}} \dsigma  \f{r-1}{\sqrt{4\f{t-s}{ε}}} \ds 
\sup_{σ\in (0,t)} |\sqrt{σ}∂_rv(1,σ)|\\
 &=\f{(r-1)\sqrt{ε}}{r\sqrt{π}}\kl{\int_0^t \f{1}{\sqrt{s}\sqrt{t-s}} \ds + \int_0^t\int_0^s \f{e^{σ-s}}{\sqrt{σ}} \dsigma  \f{1}{\sqrt{t-s}} \ds} \sup_{s\in (0,t)} |\sqrt{s}∂_rv(1,s)|\nn
\end{align}
Taking into account that 
\begin{equation}\label{pione}
 \int_0^t s^{-\f12}(t-s)^{-\f12} \ds = \int_0^1 σ^{-\f12}(1-σ)^{-\f12} \dsigma  =π \qquad \text{for all } t>0 
\end{equation}
and 
\begin{equation}\label{pitwo}
 \int_0^t \int_0^s \f{e^{σ-s}}{\sqrt{σ}}\dsigma  (t-s)^{-\f12} \ds \le \int_0^t \f{2\sqrt{s}}{\sqrt{t-s}} \ds = 2t \int_0^1\sqrt{\f{σ}{1-σ}}\dsigma  = πt \qquad \text{for all } t>0, 
\end{equation}
we see that \eqref{eq:Detanfang} proves 
%
%
%
\eqref{eq:Detball}.

As to the radial derivative, we compute 
\begin{align*}
 ∂_r &\Det{v} (r,t)\\
 &= -\f2{r^2\sqrt{π}}\int_0^t ∂_rv(1,s)\int_0^{\f{r-1}{\sqrt{4\f{t-s}{ε}}}} e^{-z^2} \dz\ds
 +  \f1{r\sqrt{π}}\int_0^t ∂_rv(1,s)
 \f{\sqrt{ε}}{\sqrt{t-s}} e^{-\f{(r-1)^2ε}{4(t-s)}} \ds \\ 
 &\qquad + \f2{r^2\sqrt{π}}\int_0^t\int_0^s e^{σ-s}∂_rv(1,σ) \dsigma  \int_0^{\f{r-1}{\sqrt{4\f{t-s}{ε}}}} e^{-z^2} \dz \ds \\
&\qquad - \f1{r\sqrt{π}}\int_0^t\int_0^s e^{σ-s}∂_rv(1,σ) \dsigma  \f{\sqrt{ε}}{\sqrt{t-s}} e^{-\f{(r-1)^2ε}{4(t-s)}}  \ds \qquad\text{ for } r\ge 1, t>0. 
 \end{align*}
Here, we can simplify two integrals according to 
\begin{align*}
 &-\f2{r^2\sqrt{π}}\int_0^t ∂_rv(1,s)\int_0^{\f{r-1}{\sqrt{4\f{t-s}{ε}}}} e^{-z^2} \dz\ds\\
 &\quad + \f2{r^2\sqrt{π}}\int_0^t\int_0^s e^{σ-s}∂_rv(1,σ) \dsigma  \int_0^{\f{r-1}{\sqrt{4\f{t-s}{ε}}}} e^{-z^2} \dz \ds
= -\f1r \Det{v}(r,t),\quad  r\ge 1, t>0.
\end{align*}
The next one can be estimated as 
\begin{align}\label{estdrei}
& \betrag{\f1{r\sqrt{π}}\int_0^t ∂_rv(1,s)
 \f{\sqrt{ε}}{\sqrt{t-s}} e^{-\f{(r-1)^2ε}{4(t-s)}} \ds}\le \f{\sqrt{ε}}{r\sqrt{π}} \sup_{s\in(0,t)}|\sqrt{s} ∂_r v(1,s)| \int_0^t s^{-\f12} (t-s)^{-\f12} \ds\nn\\
 &\le \f{2\sqrt{επt}}{r} \sup_{s\in (0,t)}|\sqrt{s} ∂_r v(1,s)|, \qquad r\ge 1, t>0, 
\end{align}
according to \eqref{pione}, and for the last we again rely on \eqref{pitwo} to see that  
\begin{align}\label{estvier}
&\betrag{- \f1{r\sqrt{π}}\int_0^t\int_0^s e^{σ-s}∂_rv(1,σ) \dsigma  \f{\sqrt{ε}}{\sqrt{t-s}} e^{-\f{(r-1)^2ε}{4(t-s)}}  \ds  }\nn\\
& \le \f{\sqrt{ε}}{r\sqrt{π}}\int_0^t \sup_{σ\in(0,t)}|\sqrt{σ}∂_rv(1,σ)| \int_0^s \f{e^{σ-s}}{\sqrt{σ}} \dsigma  (t-s)^{-\f12} \ds\nn\\
& \le \f{t\sqrt{επ}}{r} \sup_{s\in (0,t)}|\sqrt{s}∂_rv(1,s)|\qquad \text{for all } r\ge 1, t>0.
\end{align}
If we finally combine \eqref{eq:Detball}, \eqref{estdrei} and \eqref{estvier}, we can readily conclude \eqref{eq:drDetball}.
\end{proof}

\begin{lemma}\label{lem:Det-contraction}
 Let $\Om=\ballexterior$ and $T>0$. Then for every $v\in X_T$, 
\begin{equation}\label{ballDetsmall}
 \norm[X_T]{\Det v} \le (ε (1+T) + \sqrt{ε}(\sqrt T+2T+2T\sqrt{T} ))\sqrt{π}\norm[X_T]{v}.  
\end{equation}
\end{lemma}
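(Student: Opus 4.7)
The plan is to read off the conclusion almost directly from the two estimates \eqref{eq:Detball} and \eqref{eq:drDetball} just established, after relating the quantity $\sup_{τ\in(0,T)}|\sqrt{τ}∂_rv(1,τ)|$ appearing on their right-hand sides to $\norm[X_T]{v}$.

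First I would note that on $\Om=\ballexterior$ we have $∂_{ν}=-∂_r$, so $|∂_rv(1,τ)|=|∂_{ν}v(1,τ)|\le\norm[\Ldom\infty]{∂_{ν}v(\cdot,τ)}$. The definition \eqref{defnormXT}, \eqref{defEe} of $\norm[X_T]{\cdot}$ therefore gives
\[
\sqrt{τ}\,|∂_rv(1,τ)| \;=\; \sqrt{ε}\cdot\sqrt{\nf{τ}{ε}}\,|∂_rv(1,τ)| \;\le\; \sqrt{ε}\,\norm[X_T]{v}
\]
for every $τ\in(0,T)$, and consequently
\[
\sup_{τ\in(0,T)} |\sqrt{τ}\,∂_rv(1,τ)| \le \sqrt{ε}\,\norm[X_T]{v}.
\]

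Next I would plug this bound into \eqref{eq:Detball} to obtain, for every $t\in(0,T)$,
\[
\norm[\Lom\infty]{\Det v(\cdot,t)} \le \sqrt{επ}\,(1+t)\cdot\sqrt{ε}\,\norm[X_T]{v} = ε\sqrt{π}\,(1+t)\,\norm[X_T]{v},
\]
and into \eqref{eq:drDetball} to obtain
\[
\sqrt{\nf{t}{ε}}\,\norm[\Lom\infty]{∂_r\Det v(\cdot,t)} \le \sqrt{\nf{t}{ε}}\,(1+2\sqrt{t}+2t)\sqrt{επ}\cdot\sqrt{ε}\,\norm[X_T]{v} = \sqrt{επ}\,(\sqrt{t}+2t+2t\sqrt{t})\,\norm[X_T]{v}.
\]

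Finally I would add the two estimates, take the supremum over $t\in(0,T)$, and use monotonicity in $t$ of both brackets to replace $t$ by $T$, yielding
\[
\norm[X_T]{\Det v} \le \bigl(ε(1+T)+\sqrt{ε}(\sqrt{T}+2T+2T\sqrt{T})\bigr)\sqrt{π}\,\norm[X_T]{v},
\]
which is precisely \eqref{ballDetsmall}. There is no real obstacle here: the lemma is just the bookkeeping step that packages the two previously proved pointwise estimates into the norm of $X_T$, using that the $X_T$ norm already encodes the $\sqrt{t/ε}$-weighted control of the boundary derivative needed to estimate $\sqrt{τ}\,∂_rv(1,τ)$.
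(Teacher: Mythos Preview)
Your proof is correct and follows essentially the same route as the paper: both insert \eqref{eq:Detball} and \eqref{eq:drDetball} into the definition of $\norm[X_T]{\cdot}$, use $\sup_{τ\in(0,T)}|\sqrt{τ}\,∂_rv(1,τ)|\le \sqrt{ε}\,\norm[X_T]{v}$, and take the supremum in $t$. Your write-up is slightly more explicit about why the boundary-derivative factor is dominated by $\sqrt{ε}\,\norm[X_T]{v}$, but otherwise there is no meaningful difference.
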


\begin{proof}
 If we insert \eqref{eq:Detball} and \eqref{eq:drDetball} into \eqref{defnormXT}, we immediately obtain 
 \begin{align*}
  \norm[X_T]{\Det v} &= \sup_{t\in(0,T)} \kl{\norm[\Lom\infty]{\Det v(\cdot,t)}+\sqrt{\nf t{ε}} \norm[\Lom\infty]{∂_r \Det v(\cdot,t)}}\\
  &\le \sup_{t\in(0,T)} \kl{\kl{\sqrt{επ}(1+t)+\sqrt{\nf{t}{ε}}(1+2\sqrt{t}+2t)\sqrt{επ} } \sup_{s\in(0,t)} |\sqrt{s}∂_rv(1,s)| }\\
  &\le \sup_{t\in(0,T)} \kl{\sqrt{επ}(1+t) + (\sqrt{t}+2t+2t\sqrt{t})\sqrt{π}} \sqrt{ε}\norm[X_T]{v}\\
  &\le (ε (1+T) + \sqrt{ε}(\sqrt T+2T+2T\sqrt{T}) )\sqrt{π}\norm[X_T]{v}\qedhere
 \end{align*}
\end{proof}

\section{Existence. Proof of Theorem \texorpdfstring{\ref{thm:ball}}{3}}\label{sec:existence}
\begin{proof}[Proof of Theorem \ref{thm:ball}]
\textbf{a)} 
The explicit representation of $S_1\kl{\nf t{ε}}\Phi$, $\De{φ_b}$ and $\Det{v}$ and their derivatives show that (for each $ε$, $T$, radially symmetric $φ\in\Lom\infty$ and $φ_b\in\Ldom\infty$) $\Qe$ maps $X_T$ into $X_T$.\\
Moreover, $\Qe$ is a contraction: 
Let $ε_0\in(0,\nf1{\sqrt{π}})$, let $T_*>0$ be so small that  $(ε_0 (1+T_*) + \sqrt{ε_0}(\sqrt T_*+2T_*+2T_*\sqrt{T_*} )\sqrt{π}=:q<1$. 
Then, according to Lemma \ref{lem:Det-contraction}, for every $ε\in(0,ε_0)$, every $T\in(0,T_*)$, and every $v_1,v_2\in X_T$,  
\[
 \norm[X_T]{\Qe[v_1]-\Qe[v_2]} = \norm[X_T]{\Det {v_1-v_2}} \le q \norm[X_T]{v_1-v_2}. 
\]
Banach's fixed point theorem hence yields a unique solution $\ve$ in $X_T$ (for $T\in(0,T_*)$), and $\we$ is defined according to \eqref{defwe}. Since $T_*$ was independent of the initial data, successive application of the same reasoning finally provides a global-in-time solution. 

\textbf{b)} 
For the second part of the theorem -- and, in particular, for more quantitative and $ε$-independent information, which by way of Lemma \ref{lem:Det-contraction} will rely on a uniform bound for $\norm[X_T]{\ve}$ --
the mere existence result obtained above is insufficient.

We hence restrict the class of admissible initial data 
and will attempt to apply the fixed point theorem not on $X_T$, but on a bounded subset thereof. 

Let $ε_0$, $T_*$ and $q$ be as before. 
Let 
\[
M>\f1{1-q}\kl{\norm[\Lom\infty]{φ}+\kl{1+\f2{\sqrt{π}}+2C} |φ_b|+\f{2}{\sqrt{π}} \sup_{ρ\ge1} |ρφ(ρ)| },
\]
where $C$ is the constant in Lemma \ref{Deball}.

If we can show that for every $T\in(0,\min\set{1,T_*}), ε\in(0,ε_0)$, $\Qe$ maps the set 
\[B_M:=\set{v\in X_T\mid \norm[X_T]{v}\le M}\]
into itself, Banach's fixed point theorem does not only prove existence of a solution $\ve$, but also shows 
\[
 \norm[X_T]{\ve}\le M,
\]
which finally proves \eqref{thm:ball-estimate}. 

We therefore turn our attention to the proof of $\Qe(B_M)\subset B_M$: 
Due to \eqref{defEe}, \eqref{eq:S1ballexterior} and \eqref{eq:drS1ballexterior-decreasing}, we have 
 \begin{align*}
  \Ee[S_1\kl{\nf{\cdot}{ε}}Φ] (t) &= \norm[\Lom\infty]{S_1\kl{\nf{t}{ε}}Φ} +\sqrt{\nf{t}{ε}}\norm[\Lom\infty]{∂_rS_1\kl{\nf{t}{ε}}Φ}\\
  &\le \norm[\Lom\infty]{Φ} + \sqrt{\nf{t}{ε}}\f{2}{\sqrt{π}}\sqrt{\nf{ε}{t}} \sup_{ρ\ge 1} |ρΦ(ρ)| \qquad \text{for every } t>0, ε>0, 
 \end{align*}
 where 
\[
 \sup_{ρ\ge 1} |ρΦ(ρ)|= \sup_{ρ\ge 1} \betrag{ρφ(ρ) - ρ\f{φ_b(ρ)}{ρ}} \le \sup_{ρ\ge 1} |ρφ(ρ)| + |φ_b|
\]
and
\[
 \norm[\Lom\infty]{Φ}\le \norm[\Lom\infty]{φ}+\sup_{ρ\ge1} \betrag{\f{φ_b}{ρ}} \le \norm[\Lom{∞}]{φ} + |φ_b|.
\]

If we furthermore insert \eqref{eq:Deball} and \eqref{eq:drDeball} into \eqref{defnormXT}, we see that with $C$ from Lemma \ref{Deball} for any $ε\in(0,1)$, $t\in(0,1)$ we have 
\begin{align*}
& \Ee[\De{φ_b}](t) = \norm[\Lom{∞}]{\De{φ_b}(\cdot,t)} + \sqrt{\nf{t}{ε}}\norm[\Lom{∞}]{∂_r\De{φ_b}(\cdot,t)}\\
 &\le C\sqrt{εt}|φ_b| + \sqrt{\nf{t}{ε}}C\sqrt{εt}|φ_b|
 \le C(\sqrt{εt}+t)|φ_b|
 \le 2C|φ_b|.
\end{align*}
Finally, the choice of $q$ and Lemma \ref{lem:Det-contraction} ensure 
\begin{align*}
 \norm[X_T]{\Det v} \le q \norm[X_T]{v}.
\end{align*}

In conclusion, for $ε\in(0,ε_0)$, $T\in(0,\min\set{1,T_*})$ and every $v\in X_T$, 
\begin{align*}
 \norm[X_T]{\Qe v} & \le \norm[\Lom{∞}]{φ} + |φ_b| + \f2{\sqrt{π}} {\sup_{ρ\ge 1} |ρφ(ρ)| + |φ_b|} + 2C|φ_b| +q\norm[X_T]{v} \\
&  \le (1-q)M + q\norm[X_T]{v}, 
\end{align*}
so that $\norm[X_T]{\Qe v}\le M$ whenever 
$\norm[X_T]{v}\le M$. 

We postpone the proof of \eqref{thm:ball-eqlimit}, seeing that it is a corollary of
Theorem~\ref{thm:upperbound-ball}. 
\end{proof}

\begin{remark}\label{remark:existence-halfspace}
The proof of the existence result in Theorem~\ref{thm:halfspace} in \cite{fila_ishige_kawakami_largediffusionlimit} proceeds analogously. 
In place of \eqref{eq:S1ballexterior} and \eqref{eq:drS1ballexterior-decreasing}, \eqref{eq:Deball}, \eqref{eq:drDeball} and Lemma \ref{lem:Det-contraction}, one has to use \eqref{eq:alt:S1}, \eqref{eq:alt:dNS1}, \eqref{eq:alt:De}, \eqref{eq:alt:dNDe} and Lemma \ref{Dethalfspace}.
\end{remark}

\section{Upper bounds. Proofs of Theorems \texorpdfstring{\ref{thm:upperbound-halfspace}}{4} and \texorpdfstring{\ref{thm:upperbound-ball}}{5}.} \label{sec:upperbounds}
The estimates we have given in Sections~\ref{sec:estimates-halfspace} and \ref{sec:estimates-exterior} mainly deal with $\ve$. In preparation of the proofs of the estimates of $\ue=\ve+\we$ let us state the following lemma, which for $\Om=ℝ^N_+$ also was part of \cite[proof of Theorem~1.1~(c)]{fila_ishige_kawakami_largediffusionlimit}: 
\begin{lemma}\label{lem:weminusS2}
 Let $\Om=ℝ^N_+$, $N\ge 2$, or $\Om=\ballexterior$, and let $t>0$, $ε>0$. Then for every $\ve\in X_t$ and $\we$ as in \eqref{defwe} we have 
 \begin{equation}\label{eq:weminusS2}
  \norm[\Lom\infty]{\we(\cdot,t)-S_2(t)φ_b}\le 2\sqrt{εt} \norm[X_t]{\ve}.
 \end{equation}
\end{lemma}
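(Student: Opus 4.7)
The key observation is that the definition \eqref{defwe} of $\we$ can be rearranged as
\[
\we(x,t)-\kkl{S_2(t)φ_b}(x) = -\int_0^t \kkl{S_2(t-s)∂_{ν}\ve(s)}(x) \ds,
\]
so the task reduces to bounding the integrand pointwise and integrating in $s$.

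First I would invoke the contraction-type bound for $S_2$: Lemma~\ref{S2boundhalfspace} in the half-space case and Lemma~\ref{S2boundball} in the exterior case. Both yield
\[
\betrag{\kkl{S_2(t-s)∂_{ν}\ve(s)}(x)} \le \norm[\Ldom\infty]{∂_{ν}\ve(\cdot,s)}
\]
uniformly in $x\in\Ombar$ and $0<s<t$. (In the radial exterior case, $∂_{ν}\ve(\cdot,s)\bdry$ is a constant, so the lemma applies after identification with a number, as in its statement.)

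Next, by the definition \eqref{defnormXT}--\eqref{defEe} of $\norm[X_t]{\cdot}$, for every $s\in(0,t)$,
\[
\sqrt{\nf{s}{ε}}\,\norm[\Ldom\infty]{∂_{ν}\ve(\cdot,s)} \le \Ee[\ve](s) \le \norm[X_t]{\ve},
\]
and hence $\norm[\Ldom\infty]{∂_{ν}\ve(\cdot,s)} \le \sqrt{\nf{ε}{s}}\,\norm[X_t]{\ve}$. Combining these two estimates and integrating,
\[
\norm[\Lom\infty]{\we(\cdot,t)-S_2(t)φ_b} \le \int_0^t \sqrt{\nf{ε}{s}}\,\norm[X_t]{\ve}\ds = \sqrt{ε}\,\norm[X_t]{\ve}\int_0^t s^{-\f12}\ds = 2\sqrt{εt}\,\norm[X_t]{\ve},
\]
which is \eqref{eq:weminusS2}.

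There is no real obstacle here; the proof is just the concatenation of the $L^\infty$-contractivity of $S_2$ with the built-in $s^{-1/2}$-singularity of $∂_{ν}\ve$ that the norm on $X_t$ is designed to absorb. The only point worth flagging is that the argument relies on both geometries admitting the very same bound $\norm[\Lom\infty]{S_2(\tau)ψ}\le\norm[\Ldom\infty]{ψ}$, which is why a single proof covers both cases treated in the paper.
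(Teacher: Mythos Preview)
Your proof is correct and follows essentially the same route as the paper: rewrite $\we-S_2(t)\varphi_b$ via \eqref{defwe}, apply the $L^\infty$-contractivity of $S_2$ (Lemmas~\ref{S2boundhalfspace}/\ref{S2boundball}), bound $\norm[\Ldom\infty]{\partial_\nu\ve(\cdot,s)}$ by $\sqrt{\varepsilon/s}\,\norm[X_t]{\ve}$ from \eqref{defnormXT}--\eqref{defEe}, and integrate $s^{-1/2}$.
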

\begin{proof}
According to \eqref{defwe}, Lemma \ref{S2boundhalfspace} (for $\Om=ℝ^N_+$) or Lemma \ref{S2boundball} (for $\Om=\ballexterior$) and \eqref{defnormXT}, we can estimate 
\begin{align*}
& \norm[\Lom\infty]{\we(\cdot,t)-S_2(t)φ_b}=\norm[\Lom\infty]{\int_0^t S_2(t-s)∂_{ν} \ve(\cdot,s)\bdry \ds}\\
& \le \int_0^t\norm[\Ldom\infty]{∂_{ν}\ve(\cdot,s)}\ds
\le \norm[X_t]{\ve}\sqrt{ε}\int_0^t \f1{\sqrt{s}} \ds = 2\sqrt{εt} \norm[X_t]{\ve}. \qedhere
\end{align*} 
\end{proof}

\begin{proof}[Proof of Theorem \ref{thm:upperbound-halfspace}]
We let $K\subset \Ombar$ be compact. Then by the definition of $\ue$ (cf. \eqref{defQe} and Definition \ref{def:solution}, or proof of Theorem~\ref{thm:ball} and Remark \ref{remark:existence-halfspace}) we have 
\begin{align}\label{firstestimateinproofofthmupperboundhalfspace}
 \norm[L^{∞}(K)]{\ue(\cdot,t)-S_2(t)φ_b} &=  \norm[L^{∞}(K)]{\ve(\cdot, t) + \we(\cdot,t)-S_2(t)φ_b}\nn\\
 &\le \norm[L^{∞}(K)]{\ve(\cdot,t)} + \norm[L^{∞}(K)]{\we(\cdot,t)-S_2(t)φ_b}\nn\\
 &\le \norm[L^{∞}(K)]{S_1\kl{\nf{t}{ε}}Φ} + \norm[L^{∞}(K)]{\De{φ_b}(\cdot,t)} \nn\\
&\quad +\norm[L^{∞}(\Om)]{\Det \ve(\cdot,t)} +\norm[L^{∞}(\Om)]{\we(\cdot,t)-S_2(t)φ_b}
\end{align}
for every $t>0$. Here we can apply \eqref{S1halfspacesmallness}, \eqref{Dehalfspacesmallness}, 
\eqref{Dethalfspacesmallness} and \eqref{eq:weminusS2} so as to obtain $C_1=C_1(K)>0$, $C_2=C_2(K)>0$ and $C_3>0$ such that  
\begin{align*}
&\norm[L^{∞}(K)]{S_1\kl{\nf{t}{ε}}Φ} + \norm[L^{∞}(K)]{\De{φ_b}(\cdot,t)} + \norm[L^{∞}(\Om)]{\Det \ve(\cdot,t)}\\
&\quad +\norm[L^{∞}(\Om)]{\we(\cdot,t)-S_2(t)φ_b}
 \le  C_1 \sqrt{\nf{ε}t} \norm[\Lom\infty]{Φ} + C_2\norm[\Ldom\infty]{φ_b}\sqrt{ε}(t^{\f14}+t^{\f12})\\
&\quad +C_3ε^{\f12}\norm[X_t]{v}(t^{\f12}+ε^{\f12}t^{\f34}+t)+2\sqrt{εt}\norm[X_t]{\ve}
\end{align*}
for every $t>0$ and $ε\in(0,1)$. If we take the supremum over $t\in(τ_1,τ_2)$ and use boundedness of $\norm[X_{τ_2}]{\ve}$ according to \eqref{halfspacenormXTfinite}, we thereby have proven
Theorem~\ref{thm:upperbound-halfspace}.
\end{proof}

\begin{proof}[Proof of Theorem \ref{thm:upperbound-ball}]
Similarly to \eqref{firstestimateinproofofthmupperboundhalfspace}, 
the construction of $\ue$ in the proof of
Theorem~\ref{thm:ball} (or directly Definition \ref{def:solution}) allows us to estimate 
\begin{align*}
\norm[\Lom{∞}]{\ue(\cdot,t)-S_2(t)φ_b} \le ~& \norm[\Lom{∞}]{S_1\kl{\nf{t}{ε}}Φ} 
 + \norm[\Lom{∞}]{\De{φ_b}(\cdot,t)} \\ &+ \norm[\Lom{∞}]{\Det v(\cdot,t)} +
\norm[\Lom{∞}]{\we(\cdot,t)-S_2(t)φ_b}\nn
\end{align*}
for $ε\in(0,ε_0)$ and $t>0$. If we abbreviate $K:=\sup_{ρ\ge1} |ρΦ(ρ)|$ and employ \eqref{S1ballsmall}, 
\eqref{eq:Deball}, \eqref{ballDetsmall}, \eqref{eq:weminusS2}, we obtain $C>0$ such that 
the last inequality turns into 
\begin{align*}
&\norm[\Lom{∞}]{\ue(\cdot,t)-S_2(t)φ_b}\\
 &\le \f{K}{\sqrt{π}}\sqrt{\f{ε}t} + C\sqrt{εt} |φ_b| + \sqrt{ε}(\sqrt{ε}(1+t)+ \sqrt{t}+2t+2t\sqrt{t})\sqrt{π} \norm[X_t]{\ve} + 2\sqrt{εt} \norm[X_t]{\ve}
\end{align*}
for every $ε\in(0,ε_0)$ and $t>0$. Again, taking the supremum over $t\in(τ_1,τ_2)$ and using the boundedness of $\norm[X_{τ_2}]{\ve}$ according to \eqref{thm:ball-estimate}, we can conclude
Theorem~\ref{thm:upperbound-ball}.
\end{proof}

\section{A remark on condition \texorpdfstring{\eqref{decayofphi}}{(10)}. Long-time behaviour of the heat equation on the exterior of the ball.} \label{sec:remark-condition}
\begin{remark}
The main effect of the difference in geometry between $\Om=\ballexterior$ and $\Om=ℝ^N_+$ seems to lie in the additional condition $\sup_{ρ\ge 1} |ρφ(ρ)|<\infty$ to be posed on the initial data for the convergence result in the case of $\Om=\ballexterior$. 
In order to understand why this is not too unnatural, let us consider the solution of the heat equation emanating from initial data $φ=\charact{ℝ^3\setminus B_b(0)}$. 
\begin{align*}
 S_1(t)& \charact{ℝ^3\setminus B_b(0)} (r)
 = \f{1}{r\sqrt{4πt}} \kl{\int_b^{∞} ρ e^{-\f{(r-ρ)^2}{4t}} \drho  - \int_b^{∞} ρe^{-\f{(r+ρ-2)^2}{4t}} \drho }\\
 &= \f1{r\sqrt{4πt}} \kl{\int_{\f{b-r}{2\sqrt t}}^{∞} (2\sqrt{t}z+r) e^{-z^2} \dz - \int_{\f{b+r-2}{2\sqrt t}}^{∞} (2\sqrt{t} z +2-r ) e^{-z^2} 2\sqrt{t} \dz}\\
 &= \f1{r\sqrt{π}} \kl{2\sqrt{t} \int_{\f{b-r}{2\sqrt t}}^{\f{b+r-2}{2\sqrt t}} ze^{-z^2} \dz + r \int_{\f{b-r}{2\sqrt t}} ^{∞} e^{-z^2} \dz + (r-2) \int_{\f{b+r-2}{2\sqrt t}} ^{∞} e^{-z^2} \dz }\\
 &=\f{\sqrt t}{r\sqrt{π}} \kl{e^{-\f{(b-r)^2}{4t}}-e^{-\f{(b+r-2)^2}{4t}}} + \f1{\sqrt{π}} \int_{\f{b-r}{2\sqrt t}}^{∞} e^{-z^2}\dz + \f{(r-2)}{r\sqrt{π}} \int_{\f{b+r-2}{2\sqrt t}}^{∞} e^{-z^2} \dz.
\end{align*}
This means that 
\[
 S_1(t)φ→ 0 + \f12 + \f{r-2}{2r} = 1-\f1r \qquad \text{as } t\to ∞.
\]
In particular, $S_1\kl{\nf{t}{ε}}φ\not\to 0$ as $\nf{t}{ε}\to \infty$.
\end{remark}

\section{Lower estimates in the halfspace: Proof of Theorem \texorpdfstring{\ref{thm:lowerbound-halfspace}}{6}.}\label{sec:lowerbound-halfspace}
\begin{proof}[Proof of Theorem \ref{thm:lowerbound-halfspace}]
In order to show optimality of the rate $\sqrt{ε}$, we strive to find a lower estimate with the same rate, for one concrete example of initial data: 
We set $φ_b:=0$ and $φ:=\charact{\set{x_N>b}}$ for some $b>0$. Then, apparently, $Φ=φ-S_2(0)φ_b=φ$.

Due to $\ve$ being a fixed point of $\Qe$ (cf. Definition \ref{def:solution} and \eqref{defQe} or the construction in the proof of Theorem~\ref{thm:ball} and Remark~\ref{remark:existence-halfspace}), we have 
\[
 \ve(x,t)=\Qe[\ve](x,t) = S_1\kl{\nf{t}{ε}}φ(x) - 0 -\Det {\ve}(x,t) \ge S_1\kl{\nf{t}{ε}}φ(x) - |\Det {\ve}(x,t)|. 
\]
Letting $y_*:=(y',-y_N)$ for $y=(y',y_N)\in ℝ^{N-1}\times ℝ_+$, we can write the first of these terms explicitly, 
cf. \cite[(1.3) and (1.4)]{fila_ishige_kawakami_largediffusionlimit}: 

\begin{align*}
 & S_1\kl{\nf t{ε}} Φ (x)
 = \int_{ℝ^N_+} \kl{\nf{4πt}{ε}}^{-\f N2} \kl{e^{-\f{|x-y|^2}{\nf{4t}{ε}}} - e^{-\f{|x-y*|^2}{\nf{4t}{ε}}}} φ(y',y_N) \dy \\
   &= \int_{ℝ^N_+} \kl{\nf{4πt}{ε}}^{-\f N2} \kl{e^{-\f{|x'-y'|^2}{\nf{4t}{ε}}}e^{-\f{|x_N-y_N|^2}{\nf{4t}{ε}}} - e^{-\f{|x'-y'|^2}{\nf{4t}{ε}}} e^{-\f{|x_N+y_N|^2}{\nf{4t}{ε}}}} φ(y)\dy  \\
   &= \int_{ℝ^{N-1}} \kl{\nf{4πt}{ε}}^{-\f {N-1} 2} e^{-\f{|x'-y'|^2}{\nf{4t}{ε}}} \dy ' \int_b^{∞} (\nf{4πt}{ε})^{-\f12} \kl{e^{-\f{(x_N-y_N)^2}{\nf{4t}{ε}}}-e^{-\f{(x_N+y_N)^2}{\nf{4t}{ε}}}} \dy _N\\
  &= \int_{\sqrt{\f{ε}{4t}}(b-x_N)}^\infty \kl{\nf{4πt}{ε}}^{-\f12} e^{-z^2} \kl{\nf{4t}{ε}}^{\f12} \dz -\int_{\sqrt{\f{ε}{4t}(b+x_N)}}^{∞} π^{-\f12} e^{-z^2}\dz\\
  &= \f1{\sqrt{π}} \int_{\sqrt{\f{ε}{4t}}(b-x_N)}^{\sqrt{\f{ε}{4t}}(b+x_N)}e^{-z^2} \dz 
  =\f{x_N\sqrt{ε}}{\sqrt{πt}} I(ε,t,x_N),
\end{align*}
if we abbreviate 
\begin{equation*}
 I(ε,t,x_N):=\f{\sqrt{t}}{x_N\sqrt{ε}} \int_{\sqrt{\f{ε}{4t}}(b-x_N)}^{\sqrt{\f{ε}{4t}}(b+x_N)}e^{-z^2} \dz.
\end{equation*}



From \eqref{Dethalfspacesmallness} and Lemma \ref{lem:weminusS2} we obtain $C>0$ such that 
\begin{align*}
 \ue(x,t)&= \ve(x,t)+\we(x,t)\\
 & \ge 
 x_N\sqrt{\f{ε}{πt}} I(ε,t,x_N) 
 - Cε^{\f12}(t^{\f12}+ε^{\f12}t^{\f34}+t)\norm[X_t]{v} - 2\sqrt{εt}\norm[X_t]{\ve} 
\end{align*}
for all $(x,t)\in ℝ^N_+\times(0,\infty)$. According to \eqref{estimateXThalfspace}, for every $T>0$ we can hence find $\widetilde{C}>0$ such that 
\begin{align*}
 \ue(x,t)&\ge 
  x_N\sqrt{\f{ε}{πt}} I(ε,t,x_N) 
 - \widetilde{C}ε^{\f12}(t^{\f12}+ε^{\f12}t^{\f34}+t)\\
 &= \sqrt{\f{ε}{t}}\kl{ \f{x_N}{\sqrt{π}} I(ε,t,x_N) - \widetilde{C}(t)t^{\f12}(t^{\f12}+ε^{\f12}t^{\f34}+t)}
\end{align*}
holds for every $x\in ℝ^N_+$ and every $t\in(0,T)$.

After these preparations, we can begin the actual proof of the statement of
Theorem~\ref{thm:lowerbound-halfspace}: 
Given $x_0$ we pick $τ_2$ such that $\widetilde{C}(τ_2)τ_2^{\f12}(τ_2^{\f12}+τ_2^{\f34}+τ_2) < \f{x_0}{2\sqrt{π}}$ and for any compact $K\subset ℝ^{N-1}\times(x_0,∞)\times(0,τ_2)$ we let 
\[
C:=\f1{2\inf\set{t|\exists x \text{ with } (x,t)\in K}}\kl{\f{x_0}{2\sqrt{π}}-C(τ_2)τ_2^{\f12}(τ_2^{\f12}+τ_2^{\f34}+τ_2)}.
\] 
Noting that $I(ε,t,x_N)\to 1$ (uniformly with respect to $(x,t)\in K$) as $ε\to 0$, we choose $ε_0\in(0,1)$ so small that $I(ε,t,x_N)>\f12$ for all $(x,t)\in K$ and thus ensure that 
\[
  \ue(x,t) \ge C\sqrt{ε} \qquad \text{for every } (x,t)\in K \text{ and } ε\in(0,ε_0),
 \]
 as desired. 
\end{proof}

\section{Lower estimates in the exterior of the ball. Proof of Theorem~\texorpdfstring{\ref{thm:lowerbound-ball}}{7}.}\label{sec:lowerbound:exterior}

\begin{proof}[Proof of Theorem~\ref{thm:lowerbound-ball}]
We introduce $φ(r):=\f1r \charact{\set{r>b}}$ for some $b>1$ and $φ_b:=0$.
Obviously, $\sup_{r\ge 1} |rφ(r)|$ is finite, $Φ=φ$ and $\De{φ_b}=0$.

According to Definition \ref{def:solution} and \eqref{defQe}, $\ve$ is a fixed point of $\Qe$ and thus 
\[
 \ve(x,t)=\Qe[\ve](x,t) = S_1\kl{\nf{t}{ε}}φ(x) - 0 -\Det {\ve}(x,t) \ge S_1\kl{\nf{t}{ε}}φ(x) - |\Det {\ve}(x,t)|. 
\]

Again we compute the first term explicitly: 
\begin{align*}
 S_1\kl{\nf t{ε}} φ &= \f1{r\sqrt{4π\nf{t}{ε}}} \kl{\int_b^{∞} e^{-\f{(r-ρ)^2}{\nf{4t}{ε}}}\f{ρ}{ρ} \drho  - \int_b^{∞} e^{-\f{(r+ρ-2)^2}{\nf{4t}{ε}}}\drho }\\
 &= \f1{r\sqrt{π}} \int_{\f{b-r}{2\sqrt {\nf{t}{ε}}}}^{\f{b+r-2}{2\sqrt {\nf{t}{ε}}}} e^{-z^2} \dz 
= \f{r-1}{r\sqrt{π}} \sqrt{\f{ε}t} I(ε,t,r), 
\end{align*}
where 
\[
 I(ε,t,r) := \f{\sqrt t}{(r-1)\sqrt{ε}} \int_{\f{b-r}{2\sqrt {\nf{t}{ε}}}}^{\f{b+r-2}{2\sqrt {\nf{t}{ε}}}} e^{-z^2} \dz.
\]

%

By \eqref{eq:Detball} and Lemma \ref{lem:weminusS2}
\begin{align*}
\ue(r,t)= \ve(r,t)+\we(r,t) & \ge 
 \f{r-1}{r\sqrt{π}} \sqrt{\f{ε}t} I(ε,t,r) 
 - ε\sqrt{π}(1+t)\norm[X_T]{\ve}-2\sqrt{εt}\norm[X_t]{\ve} 
 \end{align*}
 for all $r\ge 1$, $t>0$; and according to \eqref{thm:ball-estimate} for every $T>0$ we can find $\widetilde{C}(T)$ such that 
 \begin{align*}
 \ue(r,t) &\ge 
 \f{r-1}{r\sqrt{π}} \sqrt{\f{ε}t} I(ε,t,r)
 - \widetilde{C} \sqrt{ε} (\sqrt{ε} (1+t) +\sqrt{t} )\\
\end{align*}
holds for every $r\ge 1$, $t\in(0,T)$ and $ε\in(0,ε_0)$. 
%
With these, we can easily prove the statement of Theorem~\ref{thm:lowerbound-ball}: 
Given $r>1$ we let $τ_2$ be such that $\f{r-1}{r} \sqrt{\f1{π}}\f12 - \widetilde{C}(τ_2) \sqrt{τ_2}(1+τ_2+\sqrt{τ_2}) =:c_1>0$. For any compact $K\subset (ℝ^3\setminus B_r(0) )\times(0,τ_2)$, we then use the (uniform in $K$) convergence 
$I(ε,t,r)\to 1$ 
as $ε\to 0$ to pick $ε_0\in(0,\f1{\sqrt{π}})$ such that $I(ε,t,x)>\f12$ for all $(x,t)\in K$. Then for $ε\in(0,ε_0)$ and $(x,t)\in K$, 
\begin{align*}
& \ue(x,t) \ge \f{r-1}{r} \sqrt{\f{ε}{πt}} \kl{\f{\sqrt{t}}{(r-1)\sqrt{ε}}\int_{\f{b-r}{2\sqrt {\nf{t}{ε}}}}^{\f{b+r-2}{2\sqrt {\nf{t}{ε}}}} e^{-z^2} \dz} -\widetilde{C} \sqrt{ε}(\sqrt{ε} (1+t) +\sqrt{t})\\
 & \ge \left(\f{r-1}{2r} \f1{\sqrt{π}} -\widetilde{C} \sqrt{t} \sqrt{ε}(\sqrt{ε} (1+t) +\sqrt{t})\right)\sqrt{\f{ε}{t}} 
 \ge \f{c_1}{\sqrt{t}}\sqrt{ε} \ge \f{c_1}{\sqrt{τ_1}}\sqrt{ε}
\end{align*}
if we define $τ_1:=\inf\set{t~|~\exists x: (x,t)\in K}$. 
Finally setting $C:=\f{c_1}{\sqrt{τ_1}}$, we obtain Theorem~\ref{thm:lowerbound-ball}.
\end{proof}

\noindent
Acknowledgements. The first author was supported in part by the Slovak
Research and Development Agency under the contract No. APVV-14-0378 and by the VEGA grant
1/0347/18. The second author was supported in part by the Grant-in-Aid for 
Scientific Research (A)(No.~15H02058)
from Japan Society for the Promotion of Science. The third author was supported by the Grant-in-Aid for Young Scientists (B)
(No.~16K17629)
from Japan Society for the Promotion of Science.


\end{document}